\newtheorem{theo}{\bf Theorem}[section] 
\newtheorem{conj}[theo]{\bf Conjecture}
\newtheorem{defi}[theo]{\bf Definition}
\newtheorem{prob}[theo]{\bf Problem} 
\newtheorem{prop}[theo]{\bf Proposition} 
\newtheorem{quest}[theo]{\bf Question} 
\newtheorem{rem}[theo]{\bf Remark} 
\newcommand{\ctl}{\centerline} 
\newcommand{\equi}{\Leftrightarrow}  
\newcommand\ifff{if and only if } 
\newcommand{\la}{\langle} \newcommand{\ra}{\rangle} 
\newcommand{\nd}{{\text{ and }}} 
\newcommand{\noi}{{\noindent}} 
\newcommand{\abs}[1]{\lvert#1\rvert} 
\newcommand{\ov}{\overline} 
\newcommand{\sm}{\smallsetminus} 
\DeclareMathOperator{\Co}{Co} 
\DeclareMathOperator{\Oo}{O} 
\DeclareMathOperator{\PSL}{PSL} 
\DeclareMathOperator{\Qa}{Qa} 
\DeclareMathOperator{\Qb}{Qb} 
\DeclareMathOperator{\Qc}{Qc} 
\DeclareMathOperator{\rk}{rk} 
\renewcommand{\a}{\alpha} 
\newcommand{\g}{{\gamma}} 
\newcommand{\Lb}{{\Lambda}} 
\newcommand{\lb}{{\lambda}} 
\newcommand{\tht}{{\theta}} 
\newcommand{\vp}{{\varepsilon}} 
\newcommand{\A}{{\mathbb A}} 
\newcommand{\D}{{\mathbb D}} 
\newcommand{\E}{{\mathbb E}} 
\newcommand{\Q}{{\mathbb Q}} 
\newcommand{\R}{{\mathbb R}} 
\newcommand{\Z}{{\mathbb Z}} 
\newcommand{\cC}{{\mathcal C}} 
\newcommand{\cE}{{\mathcal E}}
\begin{document} 
\title[Lattices and equiangular lines] 
{Families of Equiangular Lines and Lattices} 
\author[J. Martinet] 
{Jacques Martinet} 
\keywords{Euclidean lattices, equiangular families, reduction modulo~$2$} 
 
\subjclass[2000]{11H55, 11H71} 

\email{Jacques.Martinet@math.cnrs.fr} 

\begin{abstract} 
We construct Euclidean lattices whose sets of minimal vectors 
support some large equiangular families of lines, 
using notably reduction modulo~$2$ of lattices.  
We also consider some related problems, and answer a question raised 
by Greaves ([G], Subsection~1.3.2). 
\end{abstract} 

\maketitle 

\section*{Introduction 
} \label{secintro} 

In this note we consider constructions of sets of equiangular lines 
in a Euclidean space $E$ derived from Euclidean lattices. We postpone 
to Section~\ref{secbasic}  the necessary prerequisites on Euclidean lattices 
together with the statement of various general results, stating here only 
the following theorem: 

\begin{theo} \label{thintro} 
Let $\Lb$ be an even (integral) lattice in a Euclidean space $E$, 
of minimum~$m$, and let $x_0\in\Lb$ of norm 
(the square of the length) \hbox{$2m\!-\!2$}.  
Then the set vectors of norm \hbox{$2m+2$} in $\Lb$ orthogonal to $x_0$ 
and congruent to $x_0$ modulo~$2$ 
supports an equiangular family of lines of common angle $\arccos\frac 1{m+1}$. 
\end{theo}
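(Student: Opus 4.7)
The plan is to pick any two vectors $x,y$ in the prescribed set representing distinct lines (i.e.\ $y\neq\pm x$) and prove directly that $x\cdot y=\pm 2$. Dividing by the common norm $|x|^2=|y|^2=2m+2$ will then give $\cos\theta=\pm\frac{1}{m+1}$, establishing the claimed common angle.

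The central observation is that the congruence hypothesis lets us halve both the sum and the difference while staying inside $\Lb$. From $x\equiv x_0\pmod{2\Lb}$ and $y\equiv x_0\pmod{2\Lb}$ we get $x-y\in 2\Lb$ and $x+y\equiv 2x_0\equiv 0\pmod{2\Lb}$, so both $\frac{x-y}{2}$ and $\frac{x+y}{2}$ lie in $\Lb$. A one-line expansion using $|x|^2=|y|^2=2m+2$ yields
$$\left|\frac{x\pm y}{2}\right|^2 \;=\; m+1\pm\frac{x\cdot y}{2}.$$

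Next I would bring in the hypothesis that $\Lb$ is even. This has two consequences I want to use simultaneously: the minimum $m$ is itself an even integer (so $m+1$ is odd), and the two norms above, being norms of lattice vectors in an even lattice, are non-negative even integers. Since $y\neq\pm x$, neither of $\frac{x\pm y}{2}$ vanishes, so each norm is at least $m$. This forces $\bigl|\frac{x\cdot y}{2}\bigr|\leq 1$, while the parity constraint forces $\frac{x\cdot y}{2}$ to be an \emph{odd} integer. Together these two constraints pin down $\frac{x\cdot y}{2}=\pm 1$, i.e.\ $x\cdot y=\pm 2$, as required.

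The delicate point I expect is precisely this last parity step. The purely metric part of the argument gives only $|x\cdot y|\leq 2$, which by itself leaves open the possibility $x\cdot y=0$; two orthogonal lines in the family would of course break the equiangular conclusion. It is the combined use of $m$ even (so $m+1$ odd) together with the even-norm property of $\Lb$ that eliminates the value $0$ and leaves $\pm 2$ as the only possibility. Everything else—the congruence manipulation, the norm computation, and the final division to read off $\cos\theta$—is routine.
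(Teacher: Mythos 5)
Your proof is correct. It is built on the same underlying mechanism as the paper's (Proposition~1.1 through Theorem~1.4): form the half-sum and half-difference $\frac{x\pm y}{2}$, which lie in $\Lambda$ by the congruence hypothesis, bound their norms below by the minimum $m$ to get $\lvert x\cdot y\rvert\le 2$, and then use a parity constraint to rule out $x\cdot y=0$. The difference is in the packaging and in the parity step. The paper routes everything through the auxiliary vector $x_0$: it writes $y=e+f$ with $e=\frac{y-x_0}{2}$, $f=\frac{y+x_0}{2}$, uses $N(x_0)+N(y)=4m$ to conclude that $e,f$ are minimal, and then gets $y\cdot y'=-2m+2+4\,e\cdot e'\equiv 2\pmod 4$ from the integrality of $e\cdot e'$; this version only needs $\Lambda$ integral with even minimum. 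You instead exclude $x\cdot y=0$ by observing that $N\bigl(\frac{x+y}{2}\bigr)=m+1+\frac{x\cdot y}{2}$ is an even integer while $m+1$ is odd, which uses the full hypothesis that $\Lambda$ is even but is more self-contained: your argument never uses that $x_0$ has norm $2m-2$, nor the orthogonality to $x_0$ (which the paper in fact derives rather than assumes, via Proposition~1.1(3)); those hypotheses only matter for the rank bound and the non-emptiness considerations elsewhere in the paper. So your route proves the slightly more general fact that \emph{any} set of norm-$(2m+2)$ vectors in a single class mod $2\Lambda$ of an even lattice of minimum $m$ is equiangular with angle $\arccos\frac{1}{m+1}$, at the small cost of not covering the odd-integral-lattice case handled by Theorem~1.4.
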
 

\noi 
In the rest of the introduction we solely recall some basic facts 
and questions concerning families of equiangular lines. 
Since the publication in 1973 of Lehmens-Seidel seminal paper \cite{L-S}, 
there has been a large literature on the subject, among which I would like 
to quote the collective work \cite{JTYZZ} (Ann. Math., 2021). 
We refer to Greaves' survey~\cite{G} for the results mentioned below 
without a reference. We shall in particular note that for a set of a given 
rank~$n$, the numbers of lines $t=n$, $n+1$, $2n$ and $\frac{n(n+1)}2$ 
play somewhat special r\^oles. 

\medskip 

Consider a set $X$ of $t$ lines of rank~$n$ with common angle 
$\tht\in[0,\frac{\pi}2]$, with which we associate the parameter 
$\a=\arccos\tht$, and assume that $t>n$. We then have $\tht\ne\frac{\pi}2$, 
thus $0<\a<1$. Equip each line with a norm-$1$ vector, and consider 
their Gram matrix $G(X)$, with entries $1$ on its diagonal and $\pm\a$ 
off its diagonal. The matrix 

\ctl{$S(X):=\frac1{\a}(I_n-G(X))$} 

\smallskip\noi 
is a {\em Seidel matrix for $X$\/}. It depends on the ordering and orientations 
of the lines in~$X$, but its spectrum solely depends on~$X$, and in particular 
its smallest eigenvalue $\lb$, equal to $-\frac 1{\a}<-1$. 
It is immediate that if all $\pm\a$ are $+\a$ (resp. $-\a$), 
we have $\lb=-1$, which must be excluded (resp, $\lb=-(t-1)$, $t=n+1$ and 
$\a=\frac 1n$): in other words, for an acute (resp. obtuse) 
equiangular family of {\em vectors\/}, we have $t\le n$ (resp. $t\le n+1$, 
with $\a=\frac 1n$ if equality holds). 
Also we see that $t\ge n+1$ implies that the angle may take only finitely 
many values. 

The {\em absolute bound\/} ({\em Gerzen's theorem\/}) is the inequality 
$t\le\frac{n(n+1)}2$, which holds because the orthogonal projections to 
the line of $X$ are independent symmetric automorphisms. 

The importance of the value $2n$ (or $2n+1$) 
comes from {\em Neuman's theorem\/}: 
if $t>2n$, $\frac 1{\a}$ is an odd integer; compare Theorem~\ref{thintro}. 

\medskip 

In general one considers the maximum number of lines in an equiangular set 
without taking into account their rank: in terms of the definition below, 
one considers values or estimations for $t_n$, {\em not\/} for $t'_n$. 

\begin{defi} \label{defitmax} {\rm 
Let $t_n$ be the maximal cardinality of an equiangular set contained 
in an $n$-dimensional space (resp. spanning an $n$-dimensional space). 
{\small 
[Thus we have $t'_n\le t_n=\max_{k\le n}\,t_k$.]}} 
\end{defi} 

The exact values of $t_n$ are known up to $n=17$. As we shall see, we have 
$t'_n=t_n$ for $n\le 7$ and $14\le n\le 17$, but in the range $7<n<14$, 
we have the {\em strict inequality\/} $t'_n<t_n$. 
It is also known (cf. Neuman's theorem above) that $t_n$ is strictly 
larger than $2n$ except if $n=2$, $3$ $4$, $5$ or~$14$. 
This suggests the following double question: 

\begin{quest} \label{questt_n} 
For which dimensions $n$ is it true that $t'_n=t_n$? that $t'_n>2n$? 
\end{quest} 

A neighbour question closer to the subject of this note is: 

\begin{quest} \label{questlat} 
For which dimensions is the bound $t'_n$ attained on the set of minimal 
vectors of a Euclidean lattice ? 
\end{quest} 
I have put emphasis on $t'_n$ rather than on~$t_n$ because I believe 
that $t'_n$ presents irregularities as a function of~$n$ which throw some 
light on individual properties of dimensions. In this respect 
it is worth considering the absolute bound $\frac{n(n+1)}2$. 
This is known to be attained for $n=2,3,7,23$, but on no other dimensions. 
These dimensions are somewhat special, related to the existence of the 
regular hexagon and icosahedron for $n=2,3$, and to the lattices 
$\E_8$ and Leech's $\Lb_{24}$ of dimension $n+1$ for $n=7,23$. 
\newline{\small 
[This can be related to the double transitivity of the groups 
$S_3$, $A_5$, $\Oo_7(2)\sim S_6(2)$ and $\Co_3$ in degrees $3,6,28$ and $276$, 
respectively. For $n=3$, one identifies the action of $\A_5$ on the diagonals 
of a regular icosahedron as its action on its $5$-Sylow subgroups; 
lattices for $n=2,7,23$ will be described later.]}

Recent works have shown the special behaviour of the ``magic'' dimensions 
$8$ and $24$ for kissing number and sphere packing problems. 
This suggest that the absolute bound could be strict in all other dimensions. 
The following  question of maximality could be considered in place of it. 
Consider a configuration $X$ of equiangular lines which is maximal 
in the $n$-dimensional space its spans. Say that $X$ is 
{\em universally maximal\/} if $X$ remains maximal when embedded 
in dimension~$n+1$ (or any larger dimension). 
The four dimensions above are universally maximal: trivially for $n=2$, 
because $t'_n=t_n=t_{n+1}$ for $n=3,7,23$. 

\begin{prob} \label{probuniversal} 
For which dimensions $n$ does there exist a universally maximal configuration 
of $t'_n$ lines? 
\end{prob} 

Here are two more results we shall need later. 
First the {\em relative bound\/} (see \cite{G}, Th.\,1.9): 
if $t\le \frac 1{a^2}$, then $t\le\frac{n(1-\a^2)}{1-n\a^2}$. 

Next (\cite{JTYZZ}): if $\a=\frac 1{2k-1}$ ($k\ge 2$), then we have 
$t'_n=\lfloor\frac{k(n-1)}{k-1}\rfloor$ for $n$ large enough. 

\medskip 

In Section~\ref{secbasic} we explain how to apply the theory of lattices
modulo~$2$ as developed in  \cite{Ma1} and \cite{Ma2} to the construction 
of equiangular systems of lines. 
In Section~\ref{secroot}  we consider root lattices 
(certain lattices of minimum~$2$) and their duals, 
and lattices of minimum~3 derived from them, 
and in Section~\ref{sec3-4}, lattices of minimum $4$ and $5$. 

\medskip 
\noi{\bf 
Acknowledgments.} 
The author thanks Bill Allombert for his help in working with {\em PARI.GP\/}.

\section{Basic Results} \label{secbasic} 

Let $E$ be a  Euclidean space of dimension~$n$, and $\Lb$ be a (full) 
lattice in~$E$. We denote by $m$ its minimum, by $S$ the set 
of its minimal vectors and set $s=\frac 12\abs{S}$. More generally, 
given $r>0$, let 

\smallskip\ctl{ 
$S_r=\{x\in\Lb\mid x\cdot x=r\}$ and let $s_r=\frac 12\abs{S_r}$\,;} 

\smallskip\noi 
thus, $S=S_m$ and $s=s_m$. We also set $N(x)=x\cdot x$ 
(the {\em norm of~$x$\/}, the square of the usual $\|x\|$). 
We say that $\Lb$ is {\em integral\/} if all scalar products on $\Lb$ 
are integral, and then {\em even\/} if all norms are even, 
{\em odd\/} otherwise. Note that $\Lb$ is integral \ifff it is contained 
in its dual $\Lb^*=\{x\in E\mid \forall\,y\in\Lb,\,x\cdot y\in\Z\}$. 

\smallskip 

Consider vectors $x,y\in\Lb$, with $x\ne 0$, $y\ne 0$ and $y\ne\pm x$, 
and such that $y\equiv x\!\!\mod2$. Set 

\smallskip\ctl{$e=\frac{y-x}2\,\nd\,f=\frac{y+x}2$\,.} 

\smallskip\noi 
Then $e,f$ are nonzero, the set $\{\pm e,\pm f\}$ only depends on 
$\{\pm x,\pm y\}$, and we have 

\ctl{$x=-e+f\,\nd\,y=e+f$\,.} 

\smallskip\noi 
Moreover, in an equality  $\pm y=\pm x+2u$, $u$ is one of $\pm e$ or~$\pm f$.

\begin{prop} \label{propbasic} 
{\small\rm(\cite{Ma1}, \S~1)} 
With the notation above, we have 
\begin{enumerate} 
\item 
$N(x)+N(y)\ge 4m$. 
\item 
If $\Lb$ is integral, $N(y)\equiv N(x)\!\!\mod4 $.
\item 
If $(N(x)+N(y)=4m$, then $e$ and $f$ are minimal and $x\cdot y=0$. 
\end{enumerate} 
\end{prop}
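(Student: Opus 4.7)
The plan is to exploit the parallelogram identity applied to $x$ and $y$, via the two lattice vectors $e=(y-x)/2$ and $f=(y+x)/2$. The congruence $y\equiv x\!\mod 2$ guarantees $e,f\in\Lb$, while the hypothesis $y\ne\pm x$ rules out $e=0$ and $f=0$; in particular both $N(e)$ and $N(f)$ are at least~$m$. The two identities
\[
N(e)+N(f)=\tfrac14\bigl(N(y-x)+N(y+x)\bigr)=\tfrac{N(x)+N(y)}2,
\qquad
N(f)-N(e)=\tfrac14\bigl(N(y+x)-N(y-x)\bigr)=x\cdot y,
\]
are the two computations on which everything rests, and they follow immediately by expanding norms.

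For part~(1), the first identity together with $N(e),N(f)\ge m$ gives $N(x)+N(y)=2\bigl(N(e)+N(f)\bigr)\ge 4m$.

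For part~(2), a similar expansion yields $N(y)-N(x)=4\,(e\cdot f)$; when $\Lb$ is integral, $e\cdot f\in\Z$, so $N(y)\equiv N(x)\!\mod 4$.

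For part~(3), if equality $N(x)+N(y)=4m$ holds, then $N(e)+N(f)=2m$ forces $N(e)=N(f)=m$ (both summands being $\ge m$), so $e$ and $f$ are minimal; and the second identity then gives $x\cdot y=N(f)-N(e)=0$. There is no real obstacle in the argument — the content is entirely contained in the two parallelogram-type expansions, once one has observed that $e$ and $f$ are nonzero lattice vectors.
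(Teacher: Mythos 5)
Your proof is correct and follows essentially the same route as the paper: both rest on the decomposition $x=-e+f$, $y=e+f$ and the identities $N(x)+N(y)=2\bigl(N(e)+N(f)\bigr)$, $N(y)-N(x)=4\,e\cdot f$, and $x\cdot y=N(f)-N(e)$, with $N(e),N(f)\ge m$ because $e,f$ are nonzero lattice vectors. No discrepancies to report.
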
 

\begin{proof} 
(1) We have 

\ctl{ 
$N(x)+N(y)=\frac12\big(N(y-x)+N(y+x)\big)=2\big(N(e)+N(f)\big)\ge 4m$} 

\noi 
since $e$ and $f$ are nonzero. 

(2) We then have 

\ctl{ 
$N(y)-N(x)=N(e+f)-N(-e+f)=4 e\cdot f\equiv 0\!\!\mod 4$\,.} 

(3) We then have $N(e)+N(f)=2m$, hence $N(e)=N(f)=m$ 
\linebreak 
and $x\cdot y=N(f)-N(e)=0$. 
\end{proof} 

Proposition~\ref{propbasic} shows that a list of representatives of classes 
modulo~$2$ of smallest possible norms must include vectors of norm $N\le 2m$, 
and that the corresponding classes then include a unique pair $\pm x$ 
if $N<2m$, and at most $n$ such pairs if $N=2m$. 
Among known examples are the celebrated lattices $\E_8$ and $\Lb_{24}$. 

\begin{prop} \label{propscalarbound} 
Let $m'\ge m$ and $m''\ge m'$, and let $x,y,y'\in\Lb$ such that 
$N(x)=m'$, $N(y)=N(y')=m''$, $y\equiv y'\equiv x\mod 2$, and $y'\ne\pm y$, 
Then $\abs{y\cdot y'}\le m''-2m$. 
\end{prop}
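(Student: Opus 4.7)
The plan is to apply the same ``mod $2$'' construction used in Proposition~\ref{propbasic} directly to the pair $(y,y')$, and observe that $x$ plays no role in the bound beyond ensuring that $y$ and $y'$ lie in the same class modulo~$2$ (which is what makes the construction applicable).

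First I would set $e=\frac{y'-y}{2}$ and $f=\frac{y'+y}{2}$. Since $y\equiv y'\equiv x\pmod 2$, we have $y\equiv y'\pmod 2$, so both $e$ and $f$ lie in $\Lb$. Because $y'\ne\pm y$, both $e$ and $f$ are nonzero, hence each has norm at least $m$.

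Next I would compute the two quantities obtained from the parallelogram identity applied to $y,y'$:
\[
N(e)+N(f)=\tfrac12\bigl(N(y'-y)+N(y'+y)\bigr)=N(y)+N(y')=2m'',
\]
\[
N(f)-N(e)=\tfrac14\bigl(N(y'+y)-N(y'-y)\bigr)=y\cdot y'.
\]
From the first identity and the bounds $N(e)\ge m$, $N(f)\ge m$, we get $N(e)\le 2m''-m$ and $N(f)\le 2m''-m$, so
\[
\abs{y\cdot y'}=\abs{N(f)-N(e)}\le (2m''-m)-m=2m''-2m.
\]
This is weaker than what is claimed; I need a factor of $2$ better in one place. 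The sharpening comes from using the constraint $N(e)+N(f)=2m''$ together with $N(e)\ge m$ to get directly $N(f)\le 2m''-m-N(e)$, but combined with $N(e)\ge m$ gives $N(f)-N(e)\le (2m''-m) - m = 2m''-2m$; still off by a factor. Let me reconsider: actually if $N(e)+N(f)=2m''$ and both are $\ge m$, the correct optimization is $N(f)\le 2m''-N(e)\le 2m''-m$, giving $N(f)-N(e)\le 2m''-m-m=2m''-2m$. So something in my normalization is off.

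Rechecking: $N(e)=\tfrac14 N(y'-y)$, $N(f)=\tfrac14 N(y'+y)$, so $N(e)+N(f)=\tfrac14\bigl(N(y'-y)+N(y'+y)\bigr)=\tfrac12\bigl(N(y)+N(y')\bigr)=m''$, and $N(f)-N(e)=\tfrac14\cdot 4(y\cdot y')=y\cdot y'$. Hence $N(e)+N(f)=m''$, not $2m''$. Then using $N(e)\ge m$ gives $N(f)\le m''-m$, and combined with $N(e)\ge m$:
\[
y\cdot y'=N(f)-N(e)\le (m''-m)-m=m''-2m,
\]
and symmetrically $-(y\cdot y')\le m''-2m$, yielding $\abs{y\cdot y'}\le m''-2m$ as required.

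So the only substantive step is the correct normalization of $e,f$; once that is in hand, the bounds $N(e),N(f)\ge m$ (valid because these are nonzero lattice vectors) and the identity $N(e)+N(f)=m''$ immediately give the conclusion. The hypotheses on $x$ and $m'$ are used only to guarantee the congruence $y\equiv y'\pmod 2$.
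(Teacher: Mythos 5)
Your proof is correct and is essentially the paper's argument: both rest on the fact that $\frac{y'-y}2$ and $\frac{y'+y}2$ are nonzero lattice vectors (hence of norm $\ge m$) whose norms sum to $m''$ and differ by $y\cdot y'$. The only cosmetic difference is that you apply the half-sum/half-difference decomposition directly to the pair $(y,y')$, while the paper reaches the same two vectors (as $e-e'$ and $f+f'-x$) by composing the decompositions of $y$ and $y'$ relative to $x$; your observation that $x$ serves only to force $y\equiv y'\pmod 2$ is accurate.
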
 

\begin{proof} 
  Write as in Proposition~\ref{propbasic}

 \smallskip\ctl{ 
 $x=-e+f,y=e+f\,\nd\,x=-e'+f',y'=e'+f'$\,.}  

\smallskip\noi 
Calculating $N(y-y')$ in two ways we obtain the equalities 

\smallskip\ctl{ 
$2m''-2y\cdot y'=4N(e-e')\ge 4m$} 

\noi(since $e'=e$ implies $y'=y$), hence 

\ctl{
$y\cdot y'=m''-2N(e-e')\le m''-2m$\,,} 

\noi 
since $N(e\pm e')\ge m$.

Calculating similarly  $N(y+y')$, we obtain 

\smallskip\ctl{ 
$2m''-2y\cdot y'=4N(-2x+f+f')\ge 4m$} 

\noi(since $x=f+f'$ implies $y'=-y$), hence 

\ctl{
$y\cdot y'=m''-2N(e-e')\le m''-2m$\,.} 
\end{proof}

\begin{theo} \label{thequiang} 
Assume that $\Lb$ is integral of even minimum. 
Let $x_0\in\Lb$ of norm $2m-2$. Consider the set 
$\cE(x_0,2m+2)=\{\pm y_1,\dots,\pm y_k\}$ of those vectors of norm $2m+2$ 
congruent to $x_0$ modulo~$2$. 
Then the $y_i$ support an equiangular family of lines of angle 
$\arccos\frac 1{m+1}$ and rank $\le n-1$. 
{\rm\small 
[By an abuse of language, we accept sets of less than three lines.]} 
\end{theo}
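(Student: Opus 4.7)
The plan is to reduce the statement to the two preceding propositions, with one additional parity argument that uses the hypothesis that $m$ is even. Write $y_i$ for a typical element of $\cE(x_0, 2m+2)$. Since $N(x_0) + N(y_i) = (2m-2) + (2m+2) = 4m$, the pair $(x_0, y_i)$ falls exactly in the equality case of Proposition~\ref{propbasic}(3): the vectors $e_i := (y_i - x_0)/2$ and $f_i := (y_i + x_0)/2$ are minimal in $\Lb$, and $y_i \cdot x_0 = 0$. The orthogonality places every $y_i$ in the hyperplane $x_0^\perp$ (non-trivial because $m \ge 2$ forces $x_0 \ne 0$), yielding the rank bound $\le n-1$. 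Writing $y_i = 2 e_i + x_0$ and plugging into $y_i \cdot x_0 = 0$ also gives the identity $e_i \cdot x_0 = -(m-1)$, to be used below.

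For two distinct lines, Proposition~\ref{propscalarbound} applied with $m' = 2m-2$ and $m'' = 2m+2$ gives $\abs{y_i \cdot y_j} \le m'' - 2m = 2$. Integrality of $\Lb$ makes $y_i \cdot y_j$ an integer, and the congruence $y_i \equiv y_j \pmod 2$ places $(y_i - y_j)/2$ in $\Lb$ with integer norm $m + 1 - \tfrac12 y_i \cdot y_j$; this forces $y_i \cdot y_j$ to be even. Hence $y_i \cdot y_j \in \{-2, 0, 2\}$.

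The crux is excluding the value $0$, and this is where the evenness of $m$ enters decisively. Expanding $y_i \cdot y_j = (2 e_i + x_0) \cdot (2 e_j + x_0)$ with $e_i \cdot x_0 = e_j \cdot x_0 = -(m-1)$ yields $y_i \cdot y_j = 4\, e_i \cdot e_j - 2(m-1)$. If this vanished, then $e_i \cdot e_j = (m-1)/2$; but $e_i \cdot e_j \in \Z$ by integrality, whereas $(m-1)/2 \notin \Z$ when $m$ is even. The main obstacle of the proof is precisely this last step: the earlier estimates only narrow the scalar product down to three values, and it is the parity of $m$ that breaks the remaining tie. It follows that $\abs{y_i \cdot y_j} = 2$, and the common angle between any two distinct lines is $\arccos\frac{2}{2m+2} = \arccos\frac{1}{m+1}$, as claimed.
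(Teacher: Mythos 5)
Your proof is correct and follows essentially the same route as the paper: the bound $\abs{y_i\cdot y_j}\le 2$ from Proposition~\ref{propscalarbound}, the identity $y_i\cdot y_j=4\,e_i\cdot e_j-2(m-1)$ (the paper derives it via $N(e-e')$, you by direct expansion), and the parity of $m$ to pin down $y_i\cdot y_j=\pm2$. The paper compresses your two-step parity argument (evenness plus exclusion of $0$) into the single congruence $y_i\cdot y_j\equiv 2\bmod 4$, but the substance is identical.
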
 

\begin{proof} 
We have $\abs{y_i\cdot y_j}\le 2$ by Proposition~\ref{propscalarbound}. 
Next by the calculation done in the proof of this proposition 
we have $y_i\cdot y_j=m''-N(e-e')$ 
with $m''=2m+2$, and since $N(e)=N(e')=m$, 
we have $N(e-e')=2m-2e\cdot e'$, which implies 

\smallskip\ctl{$y_i\cdot y_j=-2\,m+2+4\,e\cdot e'\equiv 2\mod 4$\,,} 

\smallskip\noi 
hence $y_i\cdot y_j=\pm 2$. 

The bound for the rank comes from Proposition~\ref{propbasic},\,(4). 
\end{proof}

\begin{rem} 
\begin{enumerate} 
\item 
{\small\rm 
The restriction to {\em even\/} minima is essential. 
Indeed, if $m$ is odd, we may apply Proposition~\ref{propbasic} 
to the even sublattice $\Lb_{e v}$ of $\Lb$, of minimum $m_{e v}\ge m+1$. 
Since $m'+m''$ is strictly smaller than $4(m+1)$, 
$\cE(x_0,2m+2)$ is then empty. 
\item 
We may also clearly restrict ourselves to irreducible lattices.} 
\end{enumerate}\end{rem}

The proposition below, extracted from Section~5 of \cite{Ma2}, 
shows that the set of equiangular lines 
constructed in Theorem~\ref{thequiang} can be realized as the set 
of minimal vectors of an integral (relative) lattice of minimum~$m+1$.

\begin{prop} \label{prop_m+1} 
Let $x_0\in\Lb$ of norm $m'<2m$, let $\cC$ be its class modulo~$2$, 
let $L_0=\cC\cup2\Lb$ and let $L=L_0\cap x_0^\perp$. Set $m''=4m-m'$, 
and assume that $\cE(x_0,m'')$ is not empty. 
Then $L$ is a lattice with invariants 

\smallskip 
\ctl{$\dim L=n-1,\ \min L=m'',\ \nd\ S(L)=\cE(x_0,m'')$\,.} 
\end{prop}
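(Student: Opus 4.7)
The plan is to decompose $L = L_0 \cap x_0^\perp$ along the two cosets of $2\Lb$ that make up $L_0$---namely $2\Lb$ itself and $\cC = x_0 + 2\Lb$---and to observe that the lower bound supplied by Proposition~\ref{propbasic},\,(1) is tight exactly in the second coset. The rank claim is easy: $L_0 = \langle x_0, 2\Lb\rangle$ is a full-rank sublattice of $\Lb$, while the linear form $y \mapsto x_0 \cdot y$ on $\Lb$ has discrete, nonzero image in $\R$, so $\Lb \cap x_0^\perp$ has rank $n-1$, whence so does $L \supseteq 2(\Lb \cap x_0^\perp)$.

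For the lower bound on the minimum, I would take any nonzero $v \in L$ and split into cases. If $v \in 2\Lb$, write $v = 2w$ with $w \in \Lb \smallsetminus \{0\}$; then $N(v) = 4 N(w) \ge 4m > m''$, since $m'' = 4m - m' < 4m$. If instead $v \in \cC$, then $v \equiv x_0 \!\!\mod 2$ and $v \ne \pm x_0$ (as $x_0 \cdot v = 0$ while $x_0 \ne 0$), so Proposition~\ref{propbasic},\,(1) applied to the pair $(x_0, v)$ gives $m' + N(v) \ge 4m$, i.e.\ $N(v) \ge m''$. Combining the two cases, $\min L \ge m''$, and any minimal vector must lie in $\cC$.

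To identify $S(L)$, I would invoke the hypothesis that $\cE(x_0, m'')$ is nonempty and pick $y$ in it. Then $N(x_0) + N(y) = m' + m'' = 4m$, so Proposition~\ref{propbasic},\,(3) forces $y \cdot x_0 = 0$, placing $y$ in $\cC \cap x_0^\perp \subseteq L$. This realizes the lower bound, so $\min L = m''$; and the case analysis above shows that, conversely, any $v \in S(L)$ belongs to $\cC$ with norm $m''$, i.e.\ to $\cE(x_0, m'')$. The two inclusions yield $S(L) = \cE(x_0, m'')$. There is essentially no technical obstacle here: the crucial fact that elements of $\cE(x_0, m'')$ are automatically orthogonal to $x_0$ is precisely what Proposition~\ref{propbasic},\,(3) delivers in the extremal regime $N(x_0) + N(y) = 4m$, so the whole argument reduces to a short bookkeeping exercise.
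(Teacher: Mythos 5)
Your proof is correct and follows essentially the same route as the paper's: both arguments identify the two cosets $2\Lb$ and $\cC$ inside $L_0$, use Proposition~\ref{propbasic},\,(1) to show every $v\in\cC\sm\{\pm x_0\}$ has norm at least $m''=4m-m'<4m=\min 2\Lb$, and use Proposition~\ref{propbasic},\,(3) to see that the vectors of $\cE(x_0,m'')$ are orthogonal to $x_0$ and hence survive the intersection with $x_0^\perp$. The only difference is cosmetic: you spell out the rank computation (via the kernel of $y\mapsto x_0\cdot y$, which is legitimate here since $\Lb$ is integral so the image is a nonzero subgroup of $\Z$), a point the paper leaves implicit.
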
 

\begin{proof} 
We have $\cC=x_0+2\Lb$ and $2x_0\in2\Lb$, hence $L_0$ is a lattice 
(containing $2\Lb$ to index~$2$). By Proposition~\ref{propbasic},  
the first minimum of the norm on~$\cC$ is $m'$, attained uniquely 
at $\pm x_0$, and since $\cE$ is not empty, its second minimum 
is $m"$, attained exactly on $\cE$. 
Since $\min 2\Lb=4m>m''$, these are the first two minima of $L_0$, 
and since $\cE$ is orthogonal to $x_0$, we have $\min L=m''$ 
and $S(L)=\cE$. 
\newline{\small 
[The calculation of $\det(L)$ is carried out in \cite{Ma2}, Section~5.]} 
\end{proof} 

\begin{rem} \label{remsqrt2} {\small\rm 
The vectors in $\Lb'$ are sums of minimal vectors. This shows that its 
even part that $\Lb'_{e v}$ is generated by sums $e+e'$, $e,e'\in S(\Lb')$. 
Easy calculations then show that on $\Lb'_{e v}$, all scalar products 
are even and all norms are divisible by~$4$. 
Hence $\frac1{\sqrt 2}\Lb'_{e v}$ is an even (integral) lattice.} 
\end{rem} 

From an algorithmic viewpoint, listing the $y_i$ in Theorem~\ref{thequiang} 
from vectors of norm $2m+2$ can need lengthy computations. 
The following proposition allows us to complete this list using only minimal 
vectors. This also makes easy the calculation of the rank of~$\cE$. 
In the following proposition we keep the notation of \ref{thequiang}, setting  
moreover 

\smallskip\ctl{\{$S_0=\{x\in S \mid x_0\cdot x=m-1$\,.\}} 

\begin{prop} \label{propS_0} 
The map 

\smallskip\ctl{ 
$x\mapsto x_0-2x\,:\,S_0\to\Lb$} 

\smallskip\noi 
induces a bijection of $S_0$ onto $\cE$, and we have 
%
$\rk\cE=\rk S_0-1$. 
\end{prop}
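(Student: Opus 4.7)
\smallskip\noindent\textbf{Proof plan.}
First I would verify that the map is well-defined. For $x\in S_0$, setting $y:=x_0-2x$ gives
\[
N(y)=N(x_0)-4\,x_0\cdot x+4\,N(x)=(2m-2)-4(m-1)+4m=2m+2,
\]
and $y\equiv x_0\pmod 2$ is immediate, so $y\in\cE$. Injectivity of $x\mapsto x_0-2x$ is clear from the formula.

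For surjectivity, given $y\in\cE$ we have $N(x_0)+N(y)=4m$, so Proposition~\ref{propbasic}(3) applied to $\{x_0,y\}$ says that $e=(y-x_0)/2$ and $f=(y+x_0)/2$ are both minimal and $x_0\cdot y=0$. Setting $x:=-e=(x_0-y)/2$, one has $N(x)=m$ and $x_0\cdot x=\tfrac12\bigl(N(x_0)-x_0\cdot y\bigr)=m-1$, so $x\in S_0$ with $x_0-2x=y$. The opposite sign $-y$ has preimage $f=x_0-x\in S_0$, distinct from $-e$ since $y\neq 0$, so the bijection correctly matches the $\pm$ pairs in $\cE$.

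For the rank identity, the crucial observation is that $x_0\cdot x=m-1=\tfrac12 N(x_0)$ forces the orthogonal projection of every $x\in S_0$ onto $\R x_0$ to equal $\tfrac12 x_0$. Writing $x=\tfrac12 x_0+x'$ with $x'\perp x_0$ gives $x_0-2x=-2x'$, so the span of $\cE$ coincides with the span of $\{x':x\in S_0\}\subset x_0^\perp$, and each $x\in S_0$ lies in $\R x_0+\mathrm{span}(\cE)$. It remains to note $x_0\in\mathrm{span}(S_0)$: a direct check shows that if $x\in S_0$ then $x_0-x\in S_0$ as well (same norm $m$, same inner product $m-1$ with $x_0$), so $x_0=x+(x_0-x)\in\mathrm{span}(S_0)$ whenever $S_0$ is nonempty. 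Hence $\mathrm{span}(S_0)=\R x_0\oplus\mathrm{span}(\cE)$ as an orthogonal direct sum, which yields $\rk S_0=1+\rk\cE$.

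There is no real obstacle; everything reduces to bookkeeping on top of Proposition~\ref{propbasic}(3). The one conceptual point worth highlighting is that the canonical pair $\{e,f\}$ attached by Proposition~\ref{propbasic} to $\{x_0,y\}$ produces the preimages of $y$ and $-y$ simultaneously, and that the shared half-$x_0$ projection of all $x\in S_0$ is precisely what forces the rank to drop by one.
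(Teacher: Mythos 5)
Your proof is correct and follows essentially the same route as the paper: the bijection comes from the direct computation of $N\bigl(\tfrac{x_0-y}{2}\bigr)$ and $x_0\cdot\tfrac{x_0-y}{2}$ together with Proposition~\ref{propbasic}(3), and the rank drop comes from identifying $\la S_0\ra$ with $\R x_0\oplus\la\cE\ra$. The only cosmetic difference is that you obtain this span identity via the uniform projection $x=\tfrac12x_0+x'$, whereas the paper manipulates an explicit independent family $y_1,\dots,y_r$ and the vectors $x_i=\tfrac{x_0-y_i}{2}$; both reduce to the same observation that $x_0\in\la S_0\ra$ while $\cE\perp x_0$.
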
 

\begin{proof} 

The first assertion follows from the calculation of $x_0\cdot \frac{y-x}2$ 
and $N(\frac{y-x}2)$ for $y\in S_{2m+2}$. 

Let $r=\rk\cE$, let $y_1,\dots,y_r$ be $r$ independent vectors in~$\cE$, 
and let $x_i=\frac{x_0-y_i}2$. We have 

\smallskip\ctl{ 
$\la y_1,...,y_r\ra=\la y_1,y_2-y_1,\dots,y_r-y_1\ra= 
\la y_1,x_2-x_1,\dots,x_r-x_1\ra$\,.} 

\smallskip\noi 
Now $\la S_0\ra$ contains the $x_0-x_i$ 
(because $-y_i=x_0-2(x_0-x_i)$) and $x_0-2x_1$ ($=y_1$). Hence 
$\la S_0\ra=\la x_0,y_1,\dots,y_r\ra$\,, 
and since the $y_i$ but not $x_0$ are orthogonal to $x_0$, 
we have $\rk S_0=r+1$. 
\end{proof} 

Let $v$ be a nonzero vector in $E$ and let $H:=(\R v)^\perp$. 
The orthogonal projection to $H$ (or along $v$) of $x\in E$ is 

\smallskip\ctl{$p(x)=x-\frac{v\cdot x}{v\cdot v}\,v$\,.} 

\smallskip\noi 
Scalar products and norms of projections are given by the formulae 
{\small 
$$p(x)\cdot p(y)=x\cdot y-\frac{(v\cdot x)(v\cdot y)}{N(v)}  \nd 
N(p(x))=N(x)-\frac{(v\cdot x)^2}{N(v)}\,.\eqno{(\pmb\ast)}$$ 
} 
Note that for $x\in S$ and $x\ne\pm v$, we have 
$\abs{v\cdot x}\le \frac{N(v)}2$, since we have 
$N(v\pm x)\ge m$, hence $\mp 2(v\cdot x)\le N(v)$. 

\smallskip 
Consider a lattice $\Lb$. Then $p(\Lb)$ is a lattice \ifff $v\in\Lb$, 
and we may assume that $v$ is primitive 
(because $p$ only depends on the line $\R v$). 
We can then construct bases $(v_1,\dots,v_n)$ for $\Lb$ with $v_1=v$, 
so that $(p(v_2),\dots,p(v_n)$ is a basis for $p(\Lb)$. 

In the setting of Theorem~\ref{thequiang} projections along $x_0$ 
preserve the norms on $\cE$, and shall be used to construct lattices 
whose minimal vectors support equiangular families of lines. 
However I cannot give {\em a priori\/} a general procedure 
to find minimal vectors in projections. 
Note that if $\abs{v\cdot x}$,takes values $\a_1<\dots<\a_t$ 
on $S(\Lb)\sm\{\pm v\}$, 
the corresponding norms of the projections occur in the reverse order, 
and  $\a_t=\frac{N(v)}2$ needs $N(v)<4m$ by $(\pmb\ast)$. 
We shall consider the projections of the $x\in S(\Lb)$ such that 
$v\cdot x=\pm\a_t$, in particular when $N(v)=2m-2$ and $\a_t=\frac{N(v)}2$, 
since we then obtain equiangular families of line by applying 
Theorem~\ref{thequiang}. 
It will often happen that $S(p(\Lb))$ is the subset of $p(S(\Lb))$ 
of the $p(x)$ with $v\cdot x=\frac{N(v)}2$. This will be checked 
for all lattices we shall construct in the forthcoming sections. 

We state below a proposition which gives us some precisions on projections. 
Give $v$ we consider a partition $S^+\cup S^-$ of $S$ 
for which $x\in S^+$ needs $v\cdot x\ge 0$ (the choice of $x\in S^+$ 
among $\pm x$ is well-defined only when $v\cdot x\ne 0$). 
Using formulae $(\ast$), we easily prove: 

\begin{prop} \label{propproj} 
Consider a lattice $\Lb$ of minimum~$m$, a vector $v\in\Lb$ of norm~$2m-2$ 
and the orthogonal projection $p$ along~$v$, and assume that $S(p(\Lb)$  
is contained in~$p(S(\Lb))$. Let $x$ and $y\ne\pm x$ in~$S^+$, 
not colinear with $v$. Then we have 

\smallskip\ctl{$\frac{m-3}4\le x\cdot y\le\frac{3m-1}4$\,.} 

\smallskip 
In particular if $m=2$ (resp. $m=4$) we have $x\cdot y\in\{0,1\}$ 
\linebreak 
(resp. $x\cdot y\in\{1,2\}$.  
\qed 
\end{prop}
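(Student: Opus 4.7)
The plan is to apply the projection identities $(\pmb\ast)$ to $N(p(x)\pm p(y))$ and to combine them with a lower bound on $\min p(\Lb)$ coming from the hypothesis $S(p(\Lb))\subseteq p(S(\Lb))$. First, any minimum vector of $p(\Lb)$ is $p(z)$ for some $z\in S(\Lb)\setminus\R v$, so the elementary bound $\abs{v\cdot z}\le N(v)/2=m-1$ recorded above forces
\[
N(p(z))\;=\;m-\frac{(v\cdot z)^2}{2m-2}\;\ge\;m-\frac{(m-1)^2}{2m-2}\;=\;\frac{m+1}2,
\]
so $\min p(\Lb)\ge(m+1)/2$, with equality exactly when some minimal $z$ saturates $\abs{v\cdot z}=m-1$.

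I would read the phrase ``$x$ and $y$ not colinear with $v$'' (in the setting of interest for Theorem~\ref{thequiang}) as meaning that $p(x)$ and $p(y)$ are linearly independent minimum vectors of $p(\Lb)$. This forces $N(p(x))=N(p(y))=(m+1)/2$, and hence $v\cdot x=v\cdot y=m-1$ since $x,y\in S^+$. The vectors $p(x)\pm p(y)$ are then nonzero elements of $p(\Lb)$, so
\[
N(p(x)\pm p(y))\;=\;(m+1)\pm 2\,p(x)\cdot p(y)\;\ge\;\frac{m+1}2,
\]
which gives the bound $\abs{p(x)\cdot p(y)}\le(m+1)/4$.

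Finally, by $(\pmb\ast)$,
\[
p(x)\cdot p(y)\;=\;x\cdot y-\frac{(v\cdot x)(v\cdot y)}{N(v)}\;=\;x\cdot y-\frac{m-1}2,
\]
so the previous inequality reads $\abs{x\cdot y-(m-1)/2}\le(m+1)/4$, which rearranges to $(m-3)/4\le x\cdot y\le(3m-1)/4$. The specific claims at $m=2$ and $m=4$ follow by integrality of $x\cdot y$. The main subtlety is pinning down the hypothesis ``not colinear with $v$'': a naive reading $x,y\notin\R v$ is vacuous for $m>2$ and too weak to yield the lower bound, so one has to interpret it strongly enough to force $v\cdot x=v\cdot y=m-1$ (equivalently, $p(x),p(y)\in S(p(\Lb))$), after which the proof is the two-line rearrangement above.
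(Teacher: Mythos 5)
Your argument is correct and is precisely the intended one: the paper offers no written proof beyond the remark that the statement follows from the formulae $(\pmb\ast)$, and your combination of $(\pmb\ast)$ with the lower bound $\min p(\Lb)\ge\frac{m+1}{2}$ (forced by $S(p(\Lb))\subseteq p(S(\Lb))$ together with $\abs{v\cdot z}\le N(v)/2$ on minimal vectors) is exactly how one fills that in. Your reading of the hypotheses is also the right one, and worth keeping explicit: the stated bounds amount to $\abs{p(x)\cdot p(y)}\le\frac{m+1}{4}$ only when $v\cdot x=v\cdot y=m-1$ (so it is cleaner to take that as the hypothesis and deduce $N(p(x))=N(p(y))=\frac{m+1}{2}$ directly from $(\pmb\ast)$, rather than from minimality in $p(\Lb)$, whose minimum is only known to be $\ge\frac{m+1}{2}$), and the degenerate case $p(x)=-p(y)$ — i.e.\ $x+y=v$, $x\cdot y=-1$, which already occurs for $\A_2$ or $\D_n$ with $m=2$ — genuinely violates the lower bound and must be excluded exactly as your independence assumption does.
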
 

\section{Root Lattices and their duals} \label{secroot} 

{\em Root lattices\/} are integral lattices generated by norm-$2$ vectors. 
Theses are orthogonal sums of irreducible lattices, isometric to exactly 
one of $\A_n$ ($n\ge 1)$, $\D_n$ ($n\ge 4$) or $\E_n$ ($n=6,7,8$, 
the definition of which we recall below; see \cite{Ma}, Chapter~4 for details. 

Inside the lattice $\R^{n+1}$ (resp. $\R^n$), equipped with its canonical 
basis $(\vp_0,\dots,\vp_n)$ (resp. $(\vp_1,\dots,\vp_n)$, Let 

\smallskip\ctl{\small 
$\A_n=\{x\in\Z^{n+1}\mid \sum_{i=0}^{i=n}x_i=0\}\nd%
\D_n=\{x\in\Z^n\mid \sum_{i=1}^{i=n}x_i\equiv0\mod 0\}$\,.} 

\medskip\noi 
In $\R^8$ let $e=\frac{\vp_1+\dots+\vp_8}2$, set $\E_8=\D_8\cup(\D_8+e)$, 
and define $\E_7$ and $\E_8$ by successive sections orthogonal 
to $\vp_7-\vp_8$ and $\vp_6-\vp_7$. 

\smallskip 
For a lattice $\Lb$ in the list above the automorphism group acts 
transitively on its set of minimal vectors, among which we may choose 
arbitrarily $x_0$ when applying Theorem~\ref{thequiang}. Let us choose 
$x_0=\vp_0-\vp_1$, $\vp_1-\vp_2$ and $e$ when $\Lb=\A_n$, $\D_n$ and $\E_n$, 
respectively. 

For $\Lb=\A_n$, we have 

\smallskip\ctl{\small 
$S_0=\{-\vp_0+\vp_i,\vp_1-\vp_i,\,i \ge 2\}\ \nd\  
\pm\cE=\{\vp_0+\vp_1-2\vp_i,\,i\ge 2\}$\,.} 

\smallskip 
For $\Lb=\D_n$, we have 

\smallskip\ctl{\small 
$S_0=\{-\vp_1\pm\vp_i,\vp_2\pm\vp_i,\,i\ge 3\}\ \nd\  
\pm\cE=\{\vp_1+\vp_2\pm\vp_i,\,i\ge 3\}$.} 

\smallskip 
For $\Lb=\E_8$, $S_0$ consists of $28\times 2=56$ vectors, 
the $\vp_i+\vp_j,\,1\le i<j\le 8$ and the $28$ vectors obtained by 
negating $6$ basis vectors in~$e$. The $28$ vectors in $\cE$ (up to sign) 
are then obtain by permutations of 

\smallskip\ctl{ 
$\frac{3\vp_1+3\vp_2-\vp_3-\vp_4-\vp_5-\vp_6-\vp_7-\vp_8}2$\,.}  

\smallskip\noi 
The results for $\E_7$ and $\E_6$ are obtained by sections of~$\E_8$. 
Alternatively we could also have used the sets of minimal vectors 
in projections of $\A_n,\D_n,\E_n$. 

Summarizing, we obtain: 

\begin{prop} \label{equirootlat} 
Let $\Lb$ be an irreducible root lattice of dimension~$n$. 
Then the equiangular family of Theorem~\ref{thequiang} is of rank~$r=n-1$ 
and contains $t$~lines according to the following data: 

\ctl{ 
$\Lb=\A_n$\,: $t=r$\,; \quad 
$\Lb=\D_n$\,: $t=2r-2$\,;} 
\ctl{ 
$\Lb=\E_8$: $t=28$\,; \quad  
$\Lb=\E_7$\,: $t=16$\,; \quad  
$\Lb=\E_6$: $t=10$.} 
\noi{\small\rm  
[
$\D_{n+1}$ provides the asymptotic bound for $\a=\frac 13$, 
valid for all $n\ge 15$.]} 
\qed 
\end{prop}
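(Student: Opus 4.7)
The plan is to apply Proposition~\ref{propS_0} to each irreducible root lattice individually. Since every root lattice has minimum $m=2$, the condition $N(x_0)=2m-2=2$ places $x_0$ in $S$ itself, and the transitivity of the automorphism group on $S$ justifies the specific choices fixed above. Proposition~\ref{propS_0} then reduces the problem to computing $|S_0|$ and $\rk S_0$, via $t = |S_0|/2$ and $\rk\cE = \rk S_0 - 1$.

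First I would dispose of $\A_n$ and $\D_n$: the counts can be read off the displayed descriptions of $S_0$ given just above the proposition, yielding $|S_0| = 2(n-1)$ and $|S_0| = 4(n-2)$, hence $t = n-1 = r$ and $t = 2(n-2) = 2r-2$ with $r = n-1$. The identity $\rk S_0 = n$ holds in each case because $S_0 \cup \{x_0\}$ manifestly contains a basis of $\Lb$, so $\rk\cE = n-1$ as claimed.

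Next, for the three exceptional lattices I would exploit the root-system structure: apart from $\pm x_0$, every minimal vector $x$ satisfies $x_0 \cdot x \in \{-1, 0, +1\}$, and the involution $x\mapsto -x$ pairs the $+1$ and $-1$ orbits. Combining the kissing numbers $240, 126, 72$ with the cardinalities $126, 60, 30$ of the orthogonal root subsystems (of respective types $\E_7, \D_6, \A_5$) yields $|S_0| = 56, 32, 20$, and hence $t = 28, 16, 10$. For $\E_8$ the $28$ explicit representatives listed above confirm this directly.

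The main---and rather modest---obstacle is verifying $\rk S_0 = n$ in the exceptional cases. I would handle $\E_8$ by inspection of the displayed $56$ vectors, and deal with $\E_7$ and $\E_6$ either by restriction to the sections of $\E_8$ used to define them (where the rank claim descends automatically), or more uniformly by observing that the stabilizer of $x_0$ in the Weyl group acts irreducibly on $x_0^\perp$, so any nonempty orbit inside $S_0$ must span this hyperplane.
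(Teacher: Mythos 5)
Your plan is correct and, for $\A_n$, $\D_n$ and $\E_8$, coincides with what the paper does: the paper fixes $x_0$ by transitivity of the automorphism group, writes out $S_0$ and $\cE$ explicitly in coordinates, and reads off the counts (the displayed lists immediately before the proposition statement \emph{are} its proof). Where you diverge is in the exceptional cases: the paper lists the $56$ vectors of $S_0$ for $\E_8$ by hand and then obtains $\E_7$ and $\E_6$ ``by sections of $\E_8$'', whereas you count $\abs{S_0}$ uniformly via the kissing number minus the orthogonal root subsystem ($\E_7$, $\D_6$, $\A_5$ of sizes $126$, $60$, $30$), giving $56$, $32$, $20$ and hence $t=28,16,10$. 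Your route is cleaner and avoids the (unexplained) section argument; the paper's route has the advantage of producing explicit coordinates for $\cE$, which are used later in the text (e.g.\ to identify $S(\E_7^*)$ with the projection of $\cE$ attached to $\E_8$).

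One small imprecision: in your irreducibility argument for the rank, the vectors of $S_0$ do not lie in $x_0^\perp$ (they satisfy $x_0\cdot x=1$), so a nonempty orbit in $S_0$ cannot literally span that hyperplane. The correct formulation is that $\la S_0\ra$ is a subspace invariant under the stabilizer of $x_0$, and since it is contained neither in $\R x_0$ nor in $x_0^\perp$ while $x_0^\perp$ is irreducible under that stabilizer, it must be all of $E$; then $\rk\cE=\rk S_0-1=n-1$ by Proposition~\ref{propS_0}. This is a one-line repair, not a gap.
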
 

As for the duals, we may discard $\D_4^*\sim\D_4$, $E_8^*=E_8$ 
and $\D_n^*,\,n\ge 5$ (because $S(D_n^*)=S(\Z^n)$). 
The set $S(\A_n^*)$ supports the equiangular set of {\em vectors\/} 
with common angle $-\arccos\frac 1n$, and $S(\E_7^*)$ is is the projection 
of $\cE$ attached to~$\E_8$. The lattice $\E_6^*$ does not apply directly 
to equiangular systems of lines.
\newline{\small 
[The function $\abs{x\cdot y}$ takes two values on non-proportional 
$x,y\in S(\E_6^*)$. We shall return to such lattices in \cite{Ma3} 
in connection with the theory of {\em strongly regular graphs\/} 
(with $S(\E_6^*)$ we can recover the {\em Schl\"afli graph\/}, 
attached to the system of lines on a non-singular cubic surface).]} 

\smallskip 

The values of $t_n$ are known up to $n=17$ (see~\cite{G}). For $n=2$ to $7$, 
$t_n$ is equal to $3,6,6,10$ and $16$, respectively, 
and we have $t_n=28$ for $7\le n\le 14$. 

\begin{prop} \label{prop2-13} 
\begin{enumerate} 
\item 
We have $t'_n=t_n$ for $2\le n\le 7$, and except for $n=3$, $t'_n$ is attained 
on the set of minimal vectors of a lattice. 
\item 
For $n=3$, the maximum number of lines defined by a lattice is~$4$, 
attained uniquely on the configuration of $S(\A_3^*)$. 
\item 
For $8\le n\le 13$,  $t'_n$ is strictly smaller than $t_n$.  
\end{enumerate}\end{prop}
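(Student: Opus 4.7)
The plan is to dispatch the three assertions separately, invoking the lattice constructions of Proposition \ref{equirootlat}, Neumann's theorem, the relative bound, and one classical classification result.

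For (1), Proposition \ref{equirootlat} applied to $\E_8, \E_7, \E_6$ and $\D_5$ (of dimensions $8, 7, 6, 5$) directly supplies equiangular families of $28, 16, 10, 6$ lines of ranks $7, 6, 5, 4$ respectively, matching $t_n$ for these values of $n$. For $n = 2$, the three pairs $\pm(\vp_i - \vp_j)$ of minimal vectors of $\A_2$ are pairwise at angle $\arccos(1/2)$, so $t'_2 = 3 = t_2$. For $n = 3$, the six diagonals of the regular icosahedron furnish six equiangular lines spanning $\R^3$, giving $t'_3 = 6 = t_3$; the common cosine $1/\sqrt 5$ is irrational, matching the exception in the statement.

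For (2), I would take a lattice $\Lb \subset \R^3$, rescaled to be integral and primitive of minimum $m$, whose minimal vectors support $t \ge 4$ equiangular lines with common cosine $\alpha$. Integrality of the Gram matrix makes $\alpha$ rational, and since $-1/\alpha$ is an eigenvalue of a Seidel matrix, hence an algebraic integer, $1/\alpha$ is an integer, at least $2$ by $\abs{v\cdot w}\le m/2$. The relative bound in dimension $3$ gives $1/\alpha \le 3$ for $t = 4$ and $1/\alpha \le \sqrt 6 < 3$ for $t \ge 5$, leaving $1/\alpha \in \{2, 3\}$ (with $1/\alpha = 3$ excluded when $t \ge 5$). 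The case $1/\alpha = 2$ (lines at $60^\circ/120^\circ$) admits at most three lines in $\R^3$, an elementary projection argument forbidding a fourth. Hence $t = 4$ and $\alpha = 1/3$. A direct Gram-matrix analysis up to switching then pins the four lines down to the cube-diagonal configuration $\pm(\pm1,\pm1,\pm1)$, whose lattice span is similar to $\A_3^*$; that no proper overlattice retains these vectors as minimal is a short check.

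For (3), suppose for contradiction that $X$ is an equiangular set of $28$ lines spanning dimension $n$ with $8 \le n \le 13$, of common cosine $\alpha$. Since $28 > 2n$, Neumann's theorem forces $1/\alpha$ to be an odd integer $\ge 3$. For each odd $1/\alpha \ge 5$, substituting $t = 28$ into the relative bound $t \le n(1-\alpha^2)/(1 - n\alpha^2)$ yields a contradiction in the stated range (already $1/\alpha = 5$ demands $n \ge 14$, and larger denominators are strictly worse). Hence $\alpha = 1/3$. The remaining and principal step is to invoke the classification of Seidel matrices of order $28$ with smallest eigenvalue $-3$---equivalently, via Cameron--Goethals--Seidel--Shult, graphs on $28$ vertices whose adjacency matrix has smallest eigenvalue at least $-2$: up to switching, the unique such matrix is that of the $\E_8$ configuration, of rank $7$, contradicting $n \ge 8$. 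This classification input is the main obstacle; the relative-bound computations and the Neumann step are routine bookkeeping.
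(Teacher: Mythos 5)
Parts (1) and (3) of your proposal essentially track the paper. For (1) the paper likewise invokes the projections of $\D_5,\E_6,\E_7,\E_8$ from Proposition~\ref{equirootlat} and treats $n=2,3$ as clear. For (3) the paper uses exactly your first two steps (Neumann's theorem plus the relative bound to force $\a=\tfrac13$) and then, like you, appeals to the known fact that a $28$-line system at angle $\arccos\tfrac13$ in dimension $\le 14$ has rank~$7$; you attribute this to the Seidel-matrix classification, the paper to Kao--Yu together with a parenthetical remark, but the content of the final step is the same, and your bookkeeping with the relative bound is correct.

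Part (2) is where you diverge from the paper and where there is a genuine gap. The paper settles (2) by citing the classification of minimal classes in dimension~$3$ (\cite{Ma}, Theorem~9.2): a $3$-dimensional lattice with $s\ge4$ must have minimal vectors $e_1,e_2,e_3,e_1+e_2+e_3$ with equal scalar products, which forces the cube-diagonal ($\A_3^*$) configuration and the angle $\arccos\tfrac13$. Your opening move, ``take a lattice rescaled to be integral,'' is not available: a $3$-dimensional lattice need not be similar to an integral one, and the entire reason $n=3$ is exceptional is that the optimal $6$-line configuration (the icosahedron, cosine $1/\sqrt5$) is irrational. As written, your argument does not exclude, for instance, four of the six icosahedral diagonals being the minimal vectors of some non-integral lattice. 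The step can be repaired without assuming integrality: four rank-$3$ vectors lying in a common lattice satisfy a nonzero \emph{integral} linear dependency, so the one-dimensional kernel of their Gram matrix $I+\a S$ contains a rational vector; hence $-1/\a$ is a rational eigenvalue of the integer matrix $S$, therefore an integer, and your relative-bound and switching analysis then go through. But this rationality argument (or the citation of the minimal-class classification, as in the paper) must be supplied; integrality of the lattice cannot simply be assumed.
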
 

\begin{proof}  
(1) This is clear for $n=2$ and~$3$. For $n=4,5,6,7$, consider the projections 
of $\D_5$, $\E_6$, $\E_7$ and $\E_8$ (Proposition~\ref{prop2-13}). 

(2) 
This results from the classification of {\em minimal classes\/} for~$n=3$ 
(see \cite{Ma}, Theorem~9.2): one checks that a lattice with $s\ge 4$ 
must have a system of minimal vectors containing $e_1,e_2,e_3,e_1+e_2+e_3$ 
with equal scalar products $e_i\cdot e_j$. 

(3) 
Let $n$ as above, Assume that there exists an $n$-dimensional system 
of $s=28$ equiangular lines. Let $\tht$ be their common angle and let 
$\a=\arccos\tht$. 
We then have $s>2n$, so that by Neumann's theorem 
(\cite{G}, Theorem~1.16), we have $\a=\frac 1m$ for some odd integer $m\ge 3$. 
If $m\ge 5$, the {\em relative bound\/} $s\le\frac{1-\a^2}{1-s\a^2}$ 
(see \cite{G}, Theorem~1.9) implies $s\le 2n$, a contradiction. 
Hence we have $m=3$, and a theorem of Kao-Yu's (\cite{K-Y}) implies $n\le 27$. 
(If $n\le 14$, a system of $28$ equiangular lines with angle $\arccos\frac 13$ 
comes from dimension~$7$.) 
\end{proof}

\medskip 

An other way to construct lattices of minimum~$3$ consists in viewing them 
as lattices containing to index~$2$ their even sublattice. 
For further use we consider more generally lattices of minimum $m\ge 3$ odd. 
To state the results below we introduce some notation. 
Given a lattice $L$ and $a>0$, we denote by ${}^a L$ the group $L$ equipped 
with the scalar product $a\,(x\cdot y)$. 
(Thus we have $a\,(x\cdot y)\simeq\sqrt{a}\,L$.) 
Given an integral lattice $L$, 
$L_{e v}$ denotes the {\em even part of $L$\/}, i.e., the set of $x\in L$ 
having an even norm. If $L$ is odd, we have $[L:L_{e v}]=2$ 
and $L=\la L_{e v},e\ra$ where $e\in L$ is any vector of odd norm. 

\begin{prop} \label{propoddmin} 
Let $\Lb$ be a lattice of dimension $n\ge 2$ and of odd minimum $m\ge 3$, 
generated by its minimal vectors. Then $L:={}^{1/2}\Lb_{e v}$ is an even lattice 
of minimum $m'\ge\frac m2$, and either we have $s(\Lb)\le n$,  
or $L$ has a basis of vectors of norm~$m-1$. 
\end{prop}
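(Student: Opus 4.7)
The plan is to split the proof into three parts---the minimum bound on $L$, its even integral structure, and the dichotomy---using the hypothesis ``$\Lb$ is generated by minimal vectors'' crucially in the latter two.

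First, for the minimum: since $\Lb$ is integral, the map $v\mapsto N(v)\pmod 2$ is $\mathbb{F}_2$-linear (because $2\,v\cdot w\equiv 0\pmod 2$), with kernel $\Lb_{ev}$ of index~$2$ in $\Lb$. Any nonzero $v\in\Lb_{ev}$ satisfies $N(v)\ge m$ and $N(v)$ even, hence $N(v)\ge m+1$ since $m$ is odd; so $\min L=\tfrac12\min\Lb_{ev}\ge(m+1)/2>m/2$.

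Next, for the even integral structure: writing $\Lb=\la e_i\ra$ with $e_i\in S(\Lb)$, the lattice $\Lb_{ev}$ is generated by $\{e_i\pm e_j\}\cup\{2e_i\}$. Using the estimate $\abs{e_i\cdot e_j}\le(m-1)/2$ for $e_j\ne\pm e_i$ (from $N(e_i\pm e_j)\ge m$) together with the oddness of $m$, a short direct calculation shows that each such generator has norm divisible by~$4$ and pairwise scalar products among generators are even; hence ${}^{1/2}\Lb_{ev}$ is even integral.

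For the dichotomy, assume $s(\Lb)>n$. Since at most $n$ pairs of minimal vectors can be mutually orthogonal in $E$, some pair $e,e'\in S$ has $e\cdot e'\ne 0$; those pairs with $e\cdot e'=\pm 1$ give vectors $e\mp e'\in \Lb_{ev}$ of norm $2(m-1)$, i.e., of norm $m-1$ in $L$. Starting from a basis of $\Lb$ extracted from $S$ and exploiting the extra minimal vector provided by $s>n$, I would replace the long generators $e_i-e_1$ of $\Lb_{ev}$ by such short vectors through unimodular elementary operations, propagating via chains of non-orthogonal minimal vectors.

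The main obstacle is this last basis extraction: one must ensure that the short vectors span $\Lb_{ev}$ over $\Z$, not merely over $\Q$. For $m=3$ every nonzero off-diagonal scalar product is forced to be $\pm 1$, and the argument reduces to a combinatorial/connectedness argument on the graph of non-orthogonal pairs; for $m\ge 5$ one must control the distribution of the allowed values $\pm 1,\pm 2,\ldots,\pm(m-1)/2$ and ensure that pairs with scalar product $\pm 1$ are plentiful enough to generate $\Lb_{ev}$.
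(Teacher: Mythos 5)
Your first two parts track the paper closely. The bound on the minimum is correct (the paper does not even spell it out: $\Lb_{e v}$ is even of minimum $\ge m$, hence $\ge m+1$). The evenness argument is the same as the paper's: $\Lb_{e v}$ is generated by the sums $x+y$ of minimal vectors, and $N(x+y)=2m+2\,x\cdot y\equiv 0\bmod 4$. Note, however, that both you and the paper implicitly use that $x\cdot y$ is \emph{odd} for non-proportional minimal vectors; your claimed deduction from $\abs{x\cdot y}\le\frac{m-1}2$ and $m$ odd does not actually give this (if $x\cdot y=0$ then $N(x+y)=2m\equiv 2\bmod 4$ and $L$ is not even). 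The proposition is really being invoked in the equiangular context where all scalar products on $S$ are $\pm1$; this is a defect shared with the paper's own proof, so I do not count it against you, but your ``short direct calculation'' as described would not close it.

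The genuine gap is in the dichotomy, and you name it yourself: you never show that the norm-$(m-1)$ vectors generate $\Lb_{e v}$ over $\Z$, nor that the failure to find such a generating set forces $s\le n$. The paper's missing ingredient is a sign normalization: replace each $e_i$ ($i\ge2$) by $\pm e_i$ so that $e_1\cdot e_i=+1$; then every $e'_i:=e_1-e_i$ \emph{already} has norm $2m-2$, so $n-1$ independent short vectors of $\Lb_{e v}$ come for free, and only one more is needed. That last vector exists exactly when some $e_i\cdot e_j=-1$ with $2\le i<j$, in which case $e_i+e_j$ has norm $2m-2$ and (after reindexing to $i=2$, $j=3$) one checks that $e_2+e_3,e'_2,\dots,e'_n$ is a basis of $\Lb_{e v}$. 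In the complementary case, where all $e_i\cdot e_j=+1$, the paper rules out $s>n$ directly: writing $e_{n+1}=\sum_{k\le n}\lb_k e_k$, the equations $e_{n+1}\cdot e_i=1$ force all $\lb_k$ equal to $\frac1{n+m-1}$, while $N(e_{n+1})=m$ forces $\lb=\frac mn$, and the two are incompatible. This case analysis is precisely what your ``propagation along chains of non-orthogonal pairs'' would have to replace, and without it the dichotomy is not proved. (Your worry about scalar products $\pm2$ when $m\ge5$ is also legitimate, and again is resolved only by the implicit hypothesis that all scalar products on $S$ are $\pm1$.)
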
 

\begin{proof} 
First note that $\Lb_{e v}$ is generated by the sums $x+y$, $x,y\in S(\Lb)$, 
with $y\ne x$ since $2x=(x+y)+(x-y)$. 
We have $N(x+y)=2(m\pm 1)\equiv 0\mod 4$, which shows that $L$ is even. 

Choose a half-system $e_1,\dots,e_s$ of minimal vectors of $\Lb$ such that 
$e_1\cdot e_i=+1$ for $i\ge 2$ and that $e_1,\dots,e_n$ are independent, 
and set $e'_i=e_1-e_i,i\ge 2$. 
Suppose first that all $e_i\cdot e_j,i<j$ are equal to $+1$
and that $s>n$. Then we may write $x:=e_{n+1}$ as a $\Q$-linear 
combination of $e_1,\dots,e_n$, say, $x=\sum\lb_k\,e_k$. 
Calculating $x\cdot e_i$ for $i\le n$, we obtain 

\ctl{  
$1=x\cdot e_i=m\lb_i+\sum_{i\ne j}\lb_j=(m-1)\lb_i+\sum_i\,\lb_j$\,,} 

\smallskip\noi  
which implies that the $\lb_i$ have the common value $\lb=\frac 1{n+m-1}$. 

From $x=\lb\sum e_i$ we deduce that $m=x\cdot x=\lb\,n$, 
which implies 

\smallskip\ctl{$\frac 1{n+m-1}=\frac m n\ \equi\ (m+n)\,(n-1)=0$\,,} 

\noi a contradiction. 


\noi{\small 
[Lattices with $e_i\cdot e_j=+1$ and $s=n$ exist, and are unique 
up to isometry. They can be represented by the Gram matrix $M$ with entities
$M_{i,i}=m-1$ and $M_{i,j}=\frac{m-1}2$ if $j\ne i$, {\em except\/} 
$M_{1,1}=m+1$ and $M_{1,2}=M_{2,1}=0$.]} 

\smallskip 

Otherwise, let $i>1$ and $j>i$ such that $e_i\cdot e_j=-1$. 
If $i>n$, write $e_i$ as a $Q$=linear combination of $e_k,k\le n$. 
There must be at least three nonzero components, so that exchanging 
$i$ with some $i'\le n$, me may assume that $i\in[2,n]$, that me may then 
exchange with~$2$. We thus reduce ourselves to the case when $i=2$, 
and then we may similarly assume that~$j=3$. Finally we check 
that $e_2+e_3,e'_2,\dots,e'_n$ is a basis for $\Lb_{e v}$, 
which completes the proof of the dichotomy between the two cases above. 
\end{proof} 

Note that given $L$ we can reconstruct $\Lb$ from $L$ by the formula 

\smallskip\ctl{ 
$\Lb={}^2L_1\text{ with } L_1=\la L,\frac v 2\ra\text{ and } v\in L 
\text{ of norm }2m$\,.} 

\smallskip 

Using Proposition~\ref{propoddmin} together with the classification 
of root lattices we now prove: 

\begin{theo} \label{thmin3} 
Let $\Lb$ be an $n$-dimensional lattice of minimum~$3$ whose set $S$ 
of minimal vectors supports an equiangular family of lines 
{\em of rank~$n$\/}.  
Then except if $n=5,6$ or~$7$, the maximal number $s_n$ of lines 
is equal to $2(n-1)$, and except if $n=2$, attained uniquely on the projection 
of $\D_{n+1}$ described in Proposition~\ref{equirootlat}. 
\end{theo}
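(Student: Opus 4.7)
My plan is to reduce, via Proposition~\ref{propoddmin}, to the case where $L:={}^{1/2}\Lb_{e v}$ is an even root lattice of dimension~$n$, then to translate the equiangular condition into a constraint on the roots of~$L$, and finally to enumerate the possibilities using the classification of root lattices. First I replace $\Lb$ by $\la\cE\ra$: this sublattice still has minimum~$3$ and contains $\cE$ among its minimal vectors, so $\Lb$ is generated by its minimal vectors and Proposition~\ref{propoddmin} applies. Either $s(\Lb)\le n$, in which case $t\le s\le n\le 2(n-1)$ (with equality possible only for $n=2$), or $L$ is an even root lattice of dimension~$n$; assume henceforth the latter.

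Fix $y_0\in\cE$ and orient each other $y_i$ so that $y_i\cdot y_0=+1$. Setting $v:=2y_0$ and $z_i:=y_i-y_0$, direct computation gives: $v\in L$ has $L$-norm~$6$; each $z_i\in L$ is a root (of $L$-norm~$2$); $\la z_i,z_j\ra_L=\frac12(y_i\cdot y_j+1)\in\{0,1\}$; and $\la z_i,v\ra_L=-2$. The $z_i$ are distinct and none is the opposite of another, since $z_i+z_j=0$ would force $y_i+y_j=2y_0$, contradicting $y_0\cdot(y_i+y_j)=2\ne 6=2N(y_0)$. Conversely, any such triple $(L,v,\{z_i\})$ reconstructs an equiangular family of $t$ lines in the lattice $\la L,v/2\ra$ (with doubled inner product), and the family has rank~$n$ precisely when $v$ and the $z_i$ jointly span $L\otimes\R$. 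The theorem thereby reduces to bounding the maximum number of such roots, over all pairs $(L,v)$ with $L$ a root lattice of dimension~$n$ and $v\in L$ of norm~$6$.

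For $L\cong A_1\oplus D_{n-1}$ and $v=(e,2\vp_1)$ (where $e$ is a root of the $A_1$-summand and $\vp_1$ a standard basis vector of $D_{n-1}$), the qualifying roots are exactly $-e$ and $-\vp_1\pm\vp_j$ for $2\le j\le n-1$, giving $2n-3$ roots which span $L\otimes\R$, hence $t=2(n-1)$; a coordinate check matches this with the projection of $D_{n+1}$ of Proposition~\ref{equirootlat}. For any other pair $(L,v)$ of dimension~$n$, the root count is strictly less than $2n-3$, save when $L$ contains an $E_6$, $E_7$, or $E_8$ summand: since every root lies in a single irreducible summand of $L$ and $v$ decomposes orthogonally, the root count is additive over these summands, and one enumerates the few possibilities by going through the Weyl-orbit representatives of norm-$6$ vectors in each irreducible type $A_k$, $D_k$, $E_k$. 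The $E$-summands produce larger extremal configurations precisely when $n\in\{5,6,7\}$. The hardest step is this combinatorial case-work, in particular confirming that the rank-$n$ condition excludes spurious large configurations such as $L=D_n$ with $v=\vp_1+\cdots+\vp_6$ (which yields $15$ pairwise-compatible roots confined to a $6$-dimensional subspace, hence is admissible only for $n=6$).
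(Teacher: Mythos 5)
Your reduction is essentially the one the paper uses: Proposition~\ref{propoddmin} turns $\Lb$ into a root lattice $L={}^{1/2}\Lb_{ev}$ plus a norm-$6$ vector $v$, and the lines other than $\pm y_0$ correspond bijectively to roots $z$ of $L$ with $z\cdot v=-2$ (your $z_i=y_i-y_0$ is exactly the paper's parametrization of the class of $v$ modulo $2L$). The computation for $\A_1\perp\D_{n-1}$ and the observation that the rank-$n$ hypothesis kills configurations such as $(\D_n,\vp_1+\cdots+\vp_6)$ for $n>6$ are also correct. But the proof stops exactly where the actual work begins: the classification of pairs $(L,v)$ is asserted, not carried out, and the assertion you make about its outcome is wrong. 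The exceptional dimensions $5,6,7$ are accounted for by the \emph{irreducible} lattices $\A_5$, $\D_6$ and $\E_7$ (e.g.\ $\A_5$ with $v=\vp_0+\vp_1+\vp_2-\vp_3-\vp_4-\vp_5$ gives $9$ qualifying roots, hence $10>2(n-1)=8$ lines of rank $5$; $\D_6$ with $v=\vp_1+\cdots+\vp_6$ gives $15$ roots, hence $16>10$ lines of rank $6$), so your claim that only $E_6$, $E_7$, $E_8$ summands can beat $2n-3$ is false, and your own $\D_6$ example already contradicts it.

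A second, more structural gap: you quantify over ``all pairs $(L,v)$ with $L$ a root lattice of dimension $n$ and $v$ of norm $6$,'' omitting the condition that $v$ is not congruent modulo $2L$ to a shorter vector, which is equivalent to $\la L,\tfrac v2\ra$ having minimum exactly $\tfrac32$ (i.e.\ $\Lb$ having minimum $3$ rather than $1$ or $2$). Without it your enumeration does not close: for $L=\E_8$ every norm-$6$ vector admits $27$ roots $z$ with $z\cdot v=-2$, which span all of $\R^8$ and pairwise satisfy $z_i\cdot z_j\in\{0,1\}$, so your criterion would report $28>13=2n-3$ lines of rank $8$ --- yet the lattice $\la\E_8,\tfrac v2\ra$ has minimum $\tfrac12$ because $v$ is congruent to a root modulo $2\E_8$. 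The paper uses precisely this congruence condition to eliminate $\E_6$ and $\E_8$ and to force $n=5$ (resp.\ $6$) in the $\A_n$ (resp.\ $\D_n$) irreducible cases. Finally, the uniqueness statement (that for $n\ne2,5,6,7$ the bound is attained \emph{only} on the projection of $\D_{n+1}$) is not addressed at all; the paper obtains it by showing that in the reducible case one is forced to $L=\A_1\perp L_2$ with $L_2$ irreducible and then running through $L_2=\A_k,\D_k,\E_k$. You would need to supply all of this case-work for the argument to stand.
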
 

\begin{proof} The first case of Proposition~\ref{propoddmin} needs 
$n\ge 2(n-1)$, hence $n=2$. 

Let now $n\ge 3$. Then $\Lb$ is of the form $\la L,\frac v 2\ra$ 
where $\Lb$ is a root lattice and $v$ a norm-$6$ vector 
which is not congruent modulo~$2$ to a shorter vector. 
This condition eliminates $\E_6$ and $\E_8$, and implies 
$n\ge 5$ (resp. $n\ge 6)$ if $\Lb=\A_n$ (resp. $\D_n$), 
and because $S$ must be of rank~$n$, $n=5$ (resp.~$6$). 
Finally we are left with $\A_5$, $\D_6$ and $\E_7$, which accounts 
for the exceptional dimensions $5,6,7$. 

From now on we assume that $L=L_1\perp L_2$ and $v=\frac{x+y}2$, 
$x\in L_1$, $y\in L_2$. The vectors congruent to $\frac v 2$ modulo~$l$ 
are of the form $\frac{x+2z,y+2t}2$ with z,t in $L$. We have 

\ctl{ 
$N(x+2z)=N(x)\equi x\cdot z+N(z)=0$\,,} 

\noi 
and if $N(z)\ge N(x)$ (a condition which is automatic if $N(x)=2$), 
this needs $z=-x$.

We may assume that $N(x)=2$ and $N(y)=4$, and since $x'=x+2z$ reduces to  
$x'=\pm x$, $\dim S=n$ needs $\dim L_1=1$, i.e., $L_1=\A_1$, and then 
either $L\simeq \A_1\perp\A_1\perp\A_1$ or $L2$ is irreducible. 
The former case accounts for the theorem in dimension~$3$. 
We are now left with $L=\A_1+\A_k,k\ge 3$, $L=\A_1+\D_k,k\ge 4$ 
and $L=\A_1+\E_k,k=6,7,8$ ($n=k+1$), the convenient $y+2t$ not $\pm y$ 
being obtained with $N(t)=2$ and $y\cdot t=-2$. 

If $L=\A_1\perp\A_k$ we may take $y=\vp_0+\vp_1-\vp_2-\vp_3$ 
and must take $k=3$. We obtain $s=6$, which accounts for the theorem 
in dimension~$4$. 

If  $L=\A_1+\D_k$ we may take (a) $y=2\vp_1$ or (b) $y=\vp_1+\vp_2+\vp_3+\vp_4$ 
(in the same orbit if $n=4$). Then $y+2t$ is one of $\pm\vp_i$ 
($2(n-1)$ solutions, $\rk\,S=n$) or $\pm\vp_1\pm\dots$ with an even number 
of minus signs ($8$ solutions, $\rk\,S=5$). 

If $L=\A_1\perp\E_{n-1}$, we perform direct calculations. 
The maximal value of $\rk\,S$ is then $6,7$ and $9$ for $n=7,8$ and $9$, 
respectively. In the latter cases $S$ generates a sublattice of index~$2$ 
in~$\Lb$, isometric to the lattice constructed with $\A_1\perp\D_8$> 

We have proved that fir $n\ne 2,5,6,7$, the maximum of $s$ is $2(n-1)$,
attained on a unique lattice (up to isometry) generated by its minimal vectors. This completes the proof of Theorem~\ref{thmin3}. 
\end{proof} 

\noi{\small 
[As a Gram matrix for the projection of $\D_{n+1}$ we may choose 
$A=(a_{i,j})$ with entries $3$ on the diagonal and $1$ off the diagonal, 
except $a_{1,2}=a_{2,1}=-1$.]} 

\smallskip 

It might well be that the methods of \cite{K-Y} could prove better upper bounds 
than $t'_n\le 27$ for some~$n$. 
In the other direction, we have the lower bounds $t'_n\ge 2(n-1)$ coming 
from $\D_{n+1}$, and we shall see in Section~\ref{sec3-4} that we have 
$t'_{13}\ge 26$, attained with a system of angle $\arccos\frac 15$. 
Experimentation based on Proposition~\ref{propoddmin} applied to various
``classical'' lattices of minimum~$4$ in dimensions $8$ to~$12$ always 
produced lattices with $s<2(n-1)$. 
This supports the following conjecture: 

\begin{conj} \label{conj8-13} 
For $8\le n\le 13$ we have $t'_n\le 2n$, and $t'_{13}=26$. 
\end{conj}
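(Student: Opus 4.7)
The plan is to combine Neumann's parity theorem with a case analysis on the common angle. Suppose for contradiction that a rank-$n$ equiangular configuration of $t > 2n$ lines exists for some $n \in [8,13]$. By Neumann's theorem the common angle satisfies $\alpha = 1/(2k-1)$ for some integer $k \ge 2$, so it suffices to rule out each value of $k$ individually.

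For $k \ge 4$ the hypothesis $t \le 28 < (2k-1)^2 = 1/\alpha^2$ is automatic, so the relative bound applies and a direct computation of $n(1-\alpha^2)/(1-n\alpha^2)$ yields $t < 2n$ throughout $n \in [8,13]$. For $k = 3$ ($\alpha = 1/5$) the relative bound is effective only when $t \le 25$: for $n \le 12$ it closes the window $(2n, 25]$ but leaves $\{26, 27, 28\}$ open, while for $n = 13$ it leaves $\{27, 28\}$ open. For $k = 2$ ($\alpha = 1/3$) the relative bound is vacuous; here I would first invoke the Kao--Yu theorem as in the proof of Proposition~\ref{prop2-13} to exclude $t = 28$ in rank $n \ge 8$, and then for $t \in (2n, 27]$ rescale the configuration by $\sqrt{3}$ so that the vectors have norm $3$ and the inner products are $\pm 1$; this embeds the configuration in an integral rank-$n$ lattice of minimum $3$ whose structure is severely constrained by the root-lattice classification of Section~\ref{secroot} and Theorem~\ref{thmin3}. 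A finite inspection of equiangular sub-configurations of minimal vectors of such lattices in dimension $\le 13$ should yield $t \le 2n$.

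The decisive remaining step is $k = 3$, $\alpha = 1/5$, with $t \in \{26, 27, 28\}$. The Seidel matrix $S$ has smallest eigenvalue $-5$ of multiplicity $t - n$, and the remaining $n$ eigenvalues are constrained by the trace identities $\mathrm{tr}(S) = 0$, $\mathrm{tr}(S^2) = t(t-1)$ and by positive semidefiniteness of $S + 5I$ of rank exactly $n$. I would enumerate the feasible spectra together with the associated two-graph switching classes, ruling out every candidate except the $26$-line configuration of Section~\ref{sec3-4} at $n = 13$; that construction then provides the matching lower bound $t'_{13} \ge 26$, yielding the equality $t'_{13} = 26$. The main obstacle is precisely this enumeration: the \cite{JTYZZ} asymptotic $t \le \lfloor 3(n-1)/2 \rfloor$ drops to $18$ at $n = 13$ but is proved only for $n$ above an implicit threshold apparently far larger than $13$, so either a refined small-dimension spectral argument in the spirit of \cite{JTYZZ} or a computer-assisted enumeration of Seidel matrices with smallest eigenvalue $-5$ in dimension $13$ seems unavoidable.
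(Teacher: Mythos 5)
The first thing to note is that the paper does not prove this statement: it appears as Conjecture~\ref{conj8-13}, and the only evidence the author offers is computational (experimentation with Proposition~\ref{propoddmin} on ``classical'' minimum-$4$ lattices in dimensions $8$--$12$, which always produced $s<2(n-1)$), together with the lower bound $t'_{13}\ge 26$ supplied by the $13$-dimensional lattice $C2\times\PSL(2,25){:}C2$ of Section~\ref{sec3-4}. So there is no proof in the paper to compare yours against, and your proposal is, by your own admission, a programme rather than a proof: the ``finite inspection'' in the $\alpha=\frac13$ case and the enumeration of Seidel spectra in the $\alpha=\frac15$ case are precisely the steps that would constitute a proof, and neither is carried out.

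Beyond incompleteness, two steps are off. First, the reduction of the $\alpha=\frac13$ case to Theorem~\ref{thmin3} is unsound: rescaling the unit vectors to norm $3$ does give vectors with integral Gram matrix, but the lattice they generate may contain vectors of norm less than $3$ (or extra norm-$3$ vectors outside the configuration), so it need not be a minimum-$3$ lattice whose minimal vectors are exactly your lines. The paper itself records this failure for the four $57$-vector systems in dimension $18$, whose $\Z$-span has minimum $\le 4$; Theorem~\ref{thmin3} therefore bounds only lattice-realizable configurations, i.e.\ it addresses Question~\ref{questlat}, not $t'_n$. Second, you have located the residual difficulty in the wrong place. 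The relative bound in its correct form (Greaves, Th.~1.9) applies whenever $n\alpha^2<1$ --- the hypothesis ``$t\le 1/\alpha^2$'' as transcribed in the paper is a misprint --- so for $\alpha=\frac15$ it applies unconditionally for all $n\le 13$ and yields $t\le\frac{24n}{25-n}\le 2n$, disposing of $k=3$ entirely (with the extremal value $26$ exactly at $n=13$). What actually remains open, and is why this is still a conjecture, is the case $\alpha=\frac13$ with $2n<t\le 27$ in ranks $8$ through $13$, for which neither the relative bound, nor the Kao--Yu exclusion of $t=28$, nor any lattice argument currently gives anything.
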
 

The upper bound $2n$ allows that some $t'_n$ may be attained on angles not 
of the form $\arccos\frac 1m$, $m$~odd, like in dimension~$3$.

\section{Lattices of minimum 4 and 5} \label{sec3-4} 

In this section we concentrate on lattices of minimum~$5$ in which 
pairs of non-proportional minimal vectors have scalar product~$\pm 1$. 
In dimensions \hbox{$n\in[15$--$23]$} (and in some higher dimension) 
examples show that we have the strict inequality $t'_n>2n$,so that 
the highest values of $t'_n$ are attains on sets of angle $\arccos\frac 15$, 
whence the special interest of these lattices. 

Most of the lattices we shall consider (though not all) have been constructed 
using projections of a lattice of minimum~$4$ along a norm-$6$ vector, 
and using descending chains of cross-sections. 
We first consider such an exception.

\subsection{A 13-Dimensional Lattice} 

This is the lattice $L$ denoted by $C2\times \PSL(2,25):C2$ in \cite{WebN-S} 
(after its automorphism group). It has $s=26$, and defines the equiangular 
family referred to in Conjecture~\ref{conj8-13}. 
It has a unique (up to isometry) $14$-dimensional extension $\ov L$ 
with $s=28$, the value of~$t_{14}$, which extends uniquely wxactly 
up to dimension~$19$ under the condition that $s$ be as large as possible. 
We recover these lattices as the unique sequence of densest sections,  
for which Gram matrices denoted by $\Qa\!n$ can be read in 
\cite{WebM}, file Min5.GP. 
\newline{\small 
[The even sublattice $L_{e v}$ of $L$ deserves a remark: this is the best known 
example (found by Conway and Sloane, \cite{C-S}) of a lattice in its dimension 
having a large ``Berg\'e-Martinet invariant'' $\g'$, 
defined by $\g'(\Lb)=\big(\g(\Lb)\g(\Lb^*)\big)^{1/2}$.]}

\subsection{Lattices from the Leech Lattice} 

There is a unique orbit of norm-$6$ vectors in the Leech lattice $\Lb_{24}$, 
so that Theorem~\ref{thequiang} defines a unique lattice (up to isomorphism). 
Its set of minimal vectors consists in $276$ pairs $\pm x$, 
which are the projections along the corresponding norm-$6$ vector 
of vectors $x\in S(\Lb_{24}$ such that $v\cdot x=\pm 3$. 
This configuration is known as {\em the Witt design\/}.
Its automorphism group is $2\times\Co_3$. 

Consequently, for any pair $(L,v)$ of a relative lattice $L\subset\Lb_{24}$ 
of minimum~$4$ and of a norm-$6$ vector $v\in L$, provided that 
there exists an $x\in S(L)$ with $v\cdot x=\pm 3$, 
projection along $v$ defines a lattice whose set of minimal vectors 
supports an equiangular family contained in the Witt design. 

We have considered projections of various lattices contained in the Leech 
lattice in dimensions $14-24$ taken from \cite{WebM}, file Lambda.gp. 
The best results were obtained using Conway-Sloane's 
{\em laminated lattices\/} $\Lb_n$ or successive sections of such lattices. 
The file Min5.gp contains series of Gram matrices 
$\Qb\!n$, $16\ge n\ge 8$ and $\Qc\!n$, $23\ge n\ge 8$, starting with Gram 
matrices for projections of $\Lb_{17}$ and $\Lb_{24}$, respectively. 

The table below displays the known values of $t_n$ (taken from \cite{GSY} 
and the largest values found on lattices for equiangular families,  
using $\Qa\!n$ and $\Qb\!n$ for $n=14$, $\Qb\!n$ for $n=15,16$,  
and $\Qc\!n$ for 
\linebreak 
$17\le n\le 23$. 

\medskip 

\vbox{\small  
\ctl{Table for dimensions $14$ -- $23$} 

\medskip 
\begin{tabular}{|c|c|c|c|c|c|c|c|c|c|c|} 
\hline
$n$ & $14$ & $15$ & $16$ & $17$ & $18$ & $19$ & $20$ & $21$ & $22$ & $23$ \\ 
\hline 
\hline 
$t_n$ & $28$ & $36$ & $40$ & $48$ & $57-59$ & $72-74$ & $90-94$ & $126$ & 
$176$ & $276$ \\ 
\hline 
$lat\ge$ & $28$ & $36$ & $38$ & $48$ & $56$ & $72$ & $90$ & $126$ & 
$176$ & $276$ \\ 
\hline 
\end{tabular} 
} 

\medskip 

Inspection of the second line of this table clearly shows that we have 
$t'_n=t_n$ for all $n\in[14,23]$, and inspection of the third line 
shows that this values are attained on minimaL vectors of lattices 
for $n=14$, $15$, $17$ and \hbox{$21$--$23$}, 
As for the remaining values of $n$ I conjecture: 

\begin{conj} \label{conj14-24} 
For $n=16$, $18$, $19$ and~$20$, the largest number of lines produced 
by the minimal vectors of a lattice is $38$, $56$, $72$ and $90$, 
respectively. 
\end{conj}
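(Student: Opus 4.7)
The plan is to reduce Conjecture \ref{conj14-24} to the case of angle $\arccos\frac15$ and then to enumerate the relevant minimum-$4$ even lattices in dimension $n+1$. By Neumann's theorem and the relative bound
\[ t\le\frac{n(1-\a^2)}{1-n\a^2}, \qquad \a=\tfrac{1}{2k-1}, \]
any equiangular family of $t>2n$ lines in the dimensions $n\in\{16,18,19,20\}$ must have angle $\arccos\frac{1}{2k-1}$ with $k\in\{2,3\}$, since $k\ge 4$ already forces $t\le 33$ in dimension $20$ and strictly less in the smaller dimensions. The case $k=2$ is controlled by the constructions of Section \ref{secroot} together with Theorem \ref{thmin3}, yielding at most $2(n-1)\le 38$ lines from a lattice, strictly below the conjectured maxima of $38,56,72,90$ (indeed strictly below except in dimension $20$, where $38<90$). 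Only the case $k=3$, $\a=\frac15$ remains, so the problem is to bound the size of equiangular families arising from lattices of minimum $5$ whose non-proportional minimal vectors have scalar products $\pm 1$.

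By Remark \ref{remsqrt2} combined with Proposition \ref{prop_m+1}, each such lattice arises (up to rescaling of its even part and adjunction of a half-vector) as the configuration $\cE(x_0,10)$ attached to an even integral lattice $\Lb'$ of minimum $4$ in dimension $n+1$ together with a norm-$6$ vector $x_0\in\Lb'$. Proposition \ref{propS_0} puts $\cE(x_0,10)$ in bijection with the set $S_0=\{x\in S(\Lb'):x_0\cdot x=3\}$, so that the number of equiangular lines produced by $(\Lb',x_0)$ is exactly $\tfrac12\abs{S_0}$, a purely combinatorial quantity once $\Lb'$ and the orbit of $x_0$ under $\mathrm{Aut}(\Lb')$ are fixed. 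Thus the conjecture reduces to the assertion that for $n+1\in\{17,19,20,21\}$, no pair $(\Lb',x_0)$ of this form produces more than $38,56,72,90$ lines respectively.

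The third step is enumeration. For each of the four relevant dimensions one lists the isometry classes of even minimum-$4$ lattices, chooses representatives of the orbits of norm-$6$ vectors under their automorphism groups, and computes $\abs{S_0}$ via Proposition \ref{propS_0}. The files \texttt{Min5.GP} and \texttt{Lambda.gp} already carry out such computations systematically for the laminated lattices $\Lb_n$ and for cross-sections of the Leech lattice $\Lb_{24}$, in every instance attaining and never exceeding the conjectured maxima. A useful structural economy is that any lattice achieving a large $\cE(x_0,10)$ must contain many minimal vectors clustered about $x_0$, which tends to force it to embed densely into either $\Lb_{24}$ or one of the densest known minimum-$4$ lattices of small dimension, narrowing the search considerably.

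The main obstacle is completeness: one must rule out exotic even minimum-$4$ lattices in dimensions $17$--$21$ that escape the Leech-based and laminated constructions yet still supply a norm-$6$ vector exceeding the conjectured bound. One concrete route is to invoke the known classification of minimum-$4$ even lattices in the relevant small dimensions (through the Nebe--Sloane catalogue and its recent extensions) and to check each remaining candidate by direct computation. A more intrinsic alternative would combine the mod-$2$ congruence constraints of Proposition \ref{propbasic} with semidefinite programming bounds for spherical codes of angle $\frac15$: any $\Lb'$ with $\abs{\cE(x_0,10)}$ above the bound would exhibit a configuration of minimal vectors around $x_0$ exceeding the corresponding kissing-type bound in dimension $n+1$, yielding a contradiction. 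The first route appears feasible given the current state of computational lattice enumeration; the second is cleaner conceptually but more speculative, and I would expect the case analysis inherent in the first to be the decisive difficulty.
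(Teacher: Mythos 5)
This statement is labelled a \emph{conjecture} in the paper, and the paper offers no proof of it: the only support given is the table of values obtained computationally from the matrices $\Qb\!n$ and $\Qc\!n$ (projections of laminated lattices and of sections of the Leech lattice). Your proposal is therefore not being measured against an existing argument, and on its own terms it is a research programme rather than a proof; it contains gaps that you partly acknowledge but that are fatal as things stand.

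Two gaps are concrete. First, your reduction step claims that every rank-$n$ lattice-supported equiangular family of angle $\arccos\frac15$ arises as $\cE(x_0,10)$ for an even minimum-$4$ lattice $\Lb'$ of dimension $n+1$. Nothing in the paper gives this converse: Proposition~\ref{propoddmin} attaches to a minimum-$5$ lattice $\Lb$ an even lattice ${}^{1/2}\Lb_{ev}$ of the \emph{same} dimension $n$ (with reconstruction via a norm-$10$ vector), not an $(n{+}1)$-dimensional overlattice with a norm-$6$ vector; and the paper's own $13$-dimensional lattice $\Qa_{13}$ is introduced explicitly as an \emph{exception} to the projection construction, showing that not every relevant lattice is obtained by projecting from one dimension higher. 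Second, even granting the reduction, the enumeration you need --- all isometry classes of even minimum-$4$ lattices in dimensions $17$--$21$, or of minimum-$5$ lattices with scalar products $\pm1$ in dimensions $16$--$20$ --- does not exist and is far beyond current computational reach (mass-formula considerations make the number of classes astronomically large well before dimension $20$). The files \texttt{Min5.gp} and \texttt{Lambda.gp} cover only laminated lattices, Leech sections and a few sporadic examples, which is precisely why the author states this as a conjecture. The preliminary step (Neumann plus the relative bound forcing $\a=\frac13$ or $\frac15$ when $t>2n$, and $\frac13$ being harmless since $2(n-1)$ is below all four conjectured values) is sound, but it only isolates the hard case; it does not resolve it.
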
 

We now prove two complements, first for $n=14$, then for $n=18$, the latter 
answering a question raised in \cite{G}, Subsection~1.3.2. 

\begin{prop} \label{propdim14} 
The sets of equiangular lines afforded by $S(\Qa_{14})$ and $S(\Qb_{14})$ 
are not isometric. 
\end{prop}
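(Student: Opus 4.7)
The plan is to exhibit an isometry invariant that separates the two $28$-line configurations. Two equiangular line systems are isometric exactly when their Seidel matrices are conjugate by a signed permutation matrix (switching together with relabeling of lines), so the \emph{spectrum} of the Seidel matrix is an isometry invariant, and that is the invariant I would use.

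Concretely, from each of the two lattices $\Lb\in\{\Qa_{14},\Qb_{14}\}$ I would pick a half-system $(x_1,\dots,x_{28})$ of minimal vectors representing the $28$ lines. By the standing hypothesis of Section~\ref{sec3-4}, all off-diagonal Gram entries $x_i\cdot x_j$ equal $\pm 1$, so the $28\times 28$ Seidel matrix $S$ has zero diagonal and $\pm 1$ off-diagonal entries. I would then compute its characteristic polynomial with \emph{PARI.GP}. In both cases the smallest eigenvalue is $-5=-1/\a$ with multiplicity $28-\rk\cE=14$, since both $\cE$ span $\R^{14}$; the remaining fourteen eigenvalues form the discriminating data. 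If the two characteristic polynomials differ, the proposition follows immediately, because isometric (equivalently, switching-equivalent) line systems share their Seidel spectra.

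If by coincidence the spectra were to agree, I would turn to a finer invariant: the isomorphism class of the associated two-graph, or the order of the stabilizer of $S$ in the group of signed permutations modulo switching. The $\Qa_{14}$-configuration inherits the $\PSL(2,25)$-type symmetry of the underlying lattice, whereas the $\Qb_{14}$-configuration arises from a section of the laminated/Leech chain and should carry a rather different automorphism group, so one expects these orders to separate the two at once. The conceptual step is routine; the real obstacle is the \emph{PARI.GP} bookkeeping needed to single out the $28$ lines in each case and to produce Seidel matrices in a form whose invariants can be compared reliably.
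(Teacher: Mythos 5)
Your strategy---compare the two $28$-line systems via an isometry invariant---is the right kind of idea, and your chosen invariant (the Seidel spectrum, equivalently the characteristic polynomial of a $28\times 28$ Seidel matrix, well defined up to switching and relabeling) is indeed an isometry invariant of an equiangular line system. Your bookkeeping is also correct: with $\a=\frac15$ and rank $14$, the eigenvalue $-5$ has multiplicity $28-14=14$ and the other fourteen eigenvalues carry the discriminating information. But as written this is a plan, not a proof: the decisive step is the assertion that the two characteristic polynomials actually differ, and you never establish it---you explicitly hedge with ``if by coincidence the spectra were to agree, I would turn to a finer invariant.'' Since non-isomorphic two-graphs with equal Seidel spectra do exist, the conclusion genuinely depends on carrying out the computation (or the fallback), so the argument is incomplete until one of those verifications is actually performed and reported.

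The paper takes a different and computation-light route. It uses the invariant ``minimal number of lines one must delete to reduce the rank to $\le 13$.'' Because $S(\Qa_{14})$ is built by extending the $26$-line, rank-$13$ system $S(\Qa_{13})$ by two lines, deleting two suitable lines (in fact in a unique way) already drops the rank to $13$; for $S(\Qb_{14})$ one checks that at least four lines must be removed before the rank falls to $13$ or below. This rank-deletion invariant is manifestly preserved by isometry and separates the two configurations directly, without computing spectra. Your spectral approach would likely also work and would yield a certificate that is easy to verify independently, but to count as a proof you must either exhibit the two (distinct) characteristic polynomials or, failing that, complete one of your proposed finer comparisons (two-graph isomorphism testing or automorphism group orders).
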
 

\begin{proof} 
The construction of $S(\Qa_{14})$ from $S(\Qa_{13})$ shows that removing two 
convenient lines from $S(\Qa_{14})$ produces a set of rank~$13$ 
(indeed, in a unique way), whereas one must remove at least four lines 
from $S(\Qb14))$ to obtain a set of rank~$\le 13$. 
\end{proof} 

In \cite{GSY} the authors construct four sets of $57$ vectors which, 
when rescaled to norm~$5$ have pairwise scalar products~$\pm 1$. 
I have checked that in all examples the sublattice 
they generate in $\Z^{18}$ has a minimum $m\le 4$ (and contains norm~$5$ 
vectors with pairwise scalar products not~$\pm 1$). 
Since any subset of norm~$5$ vectors in $\Qa_{23}$ generate a lattice 
of minimum~$5$ in its span, we have: 

\begin{prop} \label{propdim18} 
The four equiangular systems above are not contained in the Witt design. 
\qed 
\end{prop}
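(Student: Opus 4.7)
The approach is to convert the claim into a contrast between two explicit invariants of the $\Z$-lattice generated by an equiangular system rescaled to norm $5$. On the Witt design side this invariant (the minimum) is forced to be $5$; on the \cite{GSY} side, it is strictly smaller in each of the four cases. The contradiction rules out the embedding.

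First I would make the lattice-theoretic framework precise. The Witt design is realised as the set of $276$ pairs $\pm x$ of minimal vectors of the rescaled projection-lattice $\Qa_{23}$, an integral lattice of minimum $5$ coming from applying Proposition~\ref{prop_m+1} to the Leech lattice. Any isometric placement of a $57$-line equiangular configuration inside the Witt design lifts (after choosing a unit vector on each line and rescaling to norm $5$) to a system of $57$ vectors inside $\Qa_{23}$. The $\Z$-lattice $M$ they generate is a full rank sublattice of the relevant span of $\Qa_{23}$, so $\min M \ge \min \Qa_{23} = 5$; since the generators themselves attain norm $5$, we have $\min M = 5$. Equivalently, every pair of distinct minimal vectors of $M$ is a pair of minimal vectors of $\Qa_{23}$ and hence, as used throughout the section, has scalar product in $\{0,\pm 1\}$.

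Second, I would take each of the four configurations from \cite{GSY}, choose representatives so that every vector has norm $5$ and pairwise scalar products $\pm 1$, and embed the whole collection into $\Z^{18}$ by writing down a $57\times 57$ Gram matrix. A short-vector enumeration of the generated lattice (for instance via \texttt{qfminim} in PARI.GP) would exhibit a vector of norm at most $4$ in $M$, or equivalently two norm-$5$ vectors of $M$ whose scalar product is not in $\{0,\pm 1\}$. Either conclusion directly contradicts the previous step, and hence rules out the embedding into the Witt design.

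The main obstacle is practical rather than conceptual: one must reconstruct the four explicit configurations from \cite{GSY} in a form amenable to PARI.GP, then perform four independent short-vector enumerations in lattices of rank up to $18$. There is no subtlety once the computations are carried out; the logical step from ``$\min M \le 4$'' to ``not contained in the Witt design'' is immediate from the invariance $\min M \ge 5$ proved in step one.
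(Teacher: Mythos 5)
Your proposal is correct and follows essentially the same route as the paper: the paper likewise observes that any subset of norm-$5$ vectors of $\Qa_{23}$ generates a lattice of minimum $5$ in its span, and then reports a computational check that each of the four \cite{GSY} configurations generates a sublattice of $\Z^{18}$ of minimum $m\le 4$ (with some pairwise scalar products not $\pm 1$), which is exactly your contradiction. The only minor slip is that non-proportional minimal vectors of $\Qa_{23}$ have scalar product $\pm 1$ rather than in $\{0,\pm 1\}$, but this does not affect the argument.
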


\subsection{Beyond dimension 23} 

The arguments used to prove Proposition~\ref{prop2-13}, (3) 
(relying on results of \cite{K-Y} together with the relative bound) prove: 

\begin{prop} \label{prop24-41} 
In the range $24\le n\le41$, $t'_n$ is strictly smaller than~$t_n$ 
(equal to $276$). 
\qed 
\end{prop}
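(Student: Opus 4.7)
The plan is to reproduce the proof of Proposition~\ref{prop2-13},(3) with $t=276$ in place of $t=28$. Suppose for contradiction that some dimension $n\in[24,41]$ supports a full-rank equiangular family of $t=276$ lines, with common parameter $\alpha$. Since $2n\le 82<276$, Neumann's theorem yields $\alpha=\frac1m$ for some odd integer $m\ge 3$.

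First I would eliminate the cases $m\ge 7$ by the relative bound. For such $m$ the required hypothesis $n<1/\alpha^2$ holds, since $n\le 41<49\le m^2$; the bound gives
\[
276\ \le\ \frac{n(1-\alpha^2)}{1-n\alpha^2}\ =\ \frac{n(m^2-1)}{m^2-n}\,,
\]
which rearranges to $n\ge\frac{276\,m^2}{m^2+275}$. At $m=7$ this reads $n\ge 13524/324>41$, and the lower bound is non-decreasing in $m$, so every odd $m\ge 7$ contradicts $n\le 41$. It is worth noting that the upper endpoint $n=41$ in the statement is precisely dictated by this computation, since at $n=42$ the case $m=7$ would no longer be excluded.

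It remains to handle $m=3$ and $m=5$. For $m=3$ the relative bound is inapplicable ($n\ge 24>9=m^2$); for $m=5$ it applies only at $n=24$ and there yields only the toothless inequality $276\le 576$. As in Proposition~\ref{prop2-13},(3), I would close these two cases by invoking the results of Kao and Yu~\cite{K-Y}, which bound from below the rank of any equiangular family with given cardinality and angle $\arccos\frac1m$; for $t=276$ and $m\in\{3,5\}$, these lower bounds comfortably exceed~$41$, providing the required contradiction.

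The main obstacle is the case $m=5$, since the relative bound essentially saturates at $n=24$ and offers no help at all for $n\in[25,41]$; one must push the Kao--Yu spectral argument all the way through. An alternative route, which sidesteps this difficulty, would be to invoke the rigidity of the $276$-line configuration at angle $\arccos\frac15$---namely, that it is isometric to the Witt design and therefore has rank exactly~$23$---which directly excludes full rank $n\ge 24$.
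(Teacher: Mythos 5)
Your proposal follows the paper's route exactly: the paper's own proof is a one‑line appeal to the argument of Proposition~\ref{prop2-13},(3) (Neumann's theorem, the relative bound, and the results of Kao--Yu), and your relative‑bound computation eliminating every odd $m\ge 7$ when $n\le 41$ is correct and indeed explains why the range stops at $41$. One caveat: for $m=5$ there is no lower bound on the rank exceeding $41$ --- $276$ equiangular lines at angle $\arccos\frac15$ do exist, in rank $23$ --- so that case must be closed the way your final paragraph does, via the rigidity statement that any such system is the Witt design and hence has rank exactly $23<24$; the phrasing in your middle paragraph (``lower bounds \dots comfortably exceed $41$'') is wrong for $m=5$, though it is the right mechanism for $m=3$, where $276$ lines at angle $\arccos\frac13$ force rank at least $139$.
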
 

In analogy with what was observed in Section~\ref{secroot} 
for angle~$\arccos\frac13$, we expect that $\arccos\frac15$ should play 
a major r\^ole in dimensions, say, $24$ to~$50$. 
However, whereas lattices contained in the Leech lattice constitute a rich 
source of even lattices of minimum~$4$, our knowledge of such lattices 
in larger dimensions is poor. I have carried out some experimentation 
(far from being exhaustive) on the even sublattices of unimodular lattices 
of minimum~$3$, using the Bacher-Venkov classification in dimensions $27$
and~$28$ (see the file unimod23to28.gp.gp in\cite{WebM}). 
Using projections of the even part of the lattice denoted there by $o27b1$ 
we found lattices with $(n,s)=(26,82)$ and $(25,108)$, and then 
$(n,s)=(24,100)$ by cross-sections. 
Gram matrices can be downloaded from Part~4 of Min5.gp. 

\medskip 

Also no infinite series of lattices having 
a fixed minimum $m\ge 4$ are known (by contrast with minima $m=2$ or~$3$ 
for which we can use root lattices). 
This leaves wide open the following question: 

\begin{quest} \label{quesasympt} 
Do there exist for each odd $m\ge 5$ infinite series $L_n$ of lattices 
with minimum $m$ and pairwise scalar products $\pm 1$ on~$S(L_n)$ 
such that $s\sim\frac{m+1}{m-1}\,n$ for $n\to\infty\,$? 
{\small 
(If not the exact bound $\lfloor\frac{m+1}{m-1}\,(n-1)\rfloor$ 
of~\cite{JTYZZ}.)} 
\end{quest} 

I have also considered angles $\arccos\frac 17$, using projections 
of lattices of minimum~$6$. The only example which deserves to be mentioned 
is due to G.~Nebe (\cite{Ne}, answering a question of mine), the proof of which 
relies on the theory of modular forms: 
{\em applying Theorem~\ref{thequiang} to an even, unimodular lattice 
of minimum~$6$ yields an equiangular family of $100$~lines\/}.




\end{document} 

I have carried out some experimentation on even sublattices of unimodular 
lattices in dimensions~$\le 28$. If $v$ is a parity vector of minimal norm 
for such a lattice, we have $N(v)\equiv n\mod 8$ and (Elkies) $N(v)\le n-16$, 
so that for $25\le n\le 41$, this norm may be $n-16$ or $n-24$. 
Corresponding lattices were called by Venkov 
of {\em general type\/} and of {\em exceptional type\/}, respectively. 
Here are the numbers of each of the types, after a classification 
due to Bacher and Venkov: